\DeclareFontFamily{U}{mathx}{\hyphenchar\font45}
\DeclareFontShape{U}{mathx}{m}{n}{
      <5> <6> <7> <8> <9> <10>
      <10.95> <12> <14.4> <17.28> <20.74> <24.88>
      mathx10
      }{}
\newcommand{\nn}[1]{{\vert\kern-0.25ex\vert\kern-0.25ex\vert #1 
    \vert\kern-0.25ex\vert\kern-0.25ex\vert}}
\newtheorem{theorem}{Theorem}[section]
\newtheorem*{theorema}{Theorem A}
\newtheorem*{theoremb}{Theorem B}
\newtheorem{lemma}[theorem]{Lemma}
\newtheorem{corollary}[theorem]{Corollary}
\newtheorem{proposition}[theorem]{Proposition}
\theoremstyle{remark}
\newtheorem{remark}[theorem]{Remark}
\theoremstyle{definition}
\newtheorem{definition}[theorem]{Definition}
\numberwithin{equation}{section}
\newcommand{\vertiii}[1]{{\left\vert\kern-0.25ex\left\vert\kern-0.25ex\left\vert #1 
    \right\vert\kern-0.25ex\right\vert\kern-0.25ex\right\vert}}
\newcounter{smallromans}
\newenvironment{romanenumerate}
{\begin{list}{{\normalfont\textrm{(\roman{smallromans})}}}%
  {\usecounter{smallromans}\setlength{\itemindent}{0cm}%
   \setlength{\leftmargin}{5.5ex}\setlength{\labelwidth}{5.5ex}%
   \setlength{\topsep}{.5ex}\setlength{\partopsep}{.5ex}%
   \setlength{\itemsep}{0.1ex}}}%
{\end{list}}
\newcounter{smallromansdash}
\newcounter{bigromans} 
  {\end{list}}
\begin{document}


\baselineskip=17pt



\title[Ornstein--Uhlenbeck processes driven by a L\'evy process]{Law equivalence of Ornstein--Uhlenbeck processes driven by a L\'evy process}
\author[G.~Bartosz]{Grzegorz Bartosz}
\address{Korczaka 6, 43-100 Tychy, Poland}
\email{grzegorzjanbartosz@wp.pl}
\author[T.~Kania]{Tomasz Kania}
\address{Institute of Mathematics, Czech Academy of Sciences, \v{Z}itn\'{a} 25, 115~67 Prague 1, Czech Republic}
\email{tomasz.marcin.kania@gmail.com}
\thanks{The second-named author acknowledges with thanks funding received from GA\v{C}R project 19-07129Y; RVO 67985840 (Czech Republic).}

\date{\today}

\begin{abstract}We demonstrate that two Ornstein--Uhlenbeck processes, that is, solutions to certain stochastic differential equations that are driven by a L\'evy process $L$ have equivalent laws as long as the eigenvalues of the covariance operator associated to the Wiener part of $L$ are strictly positive. Moreover, we show that in the case where the underlying L\'evy process is a purely jump process, which means that  neither it has a Wiener part nor the drift, the absolute continuity of the law of one solution with respect to another forces equality of the solutions almost surely.\end{abstract}

\subjclass[2010]{60H10, 60G15 (primary), and 93E11, 60G30 (secondary)} 
\keywords{SDE, L\'evy process, Ornstein--Uhlenbeck process, L\'evy--It\^o decomposition, Wiener process, Girsanov's theorem}

\maketitle
\section{Introduction and the main result}
Let $H$ be a real separable Hilbert space and let $L=(L_t)_{t\geqslant 0}$ be an $H$-valued L\'evy process. We fix two bounded linear operators $A$, $\tilde{A}$ on $H$; should the space $H$ have finite dimension $d$, we think of $A$ and $\tilde{A}$ as $d\times d$ real matrices in some fixed basis. We consider the corresponding \emph{$H$-valued Ornstein--Uhlenbeck processes driven by a L\'evy process}, that is, solutions to the following stochastic differential equations:
\begin{equation}\label{equations}\left\{\begin{array}{lcl} {\rm d}X_t &=& A X_t\, {\rm d}t + {\rm d}L_t,\\ {\rm d}\tilde{X}_t &=& \tilde{A} \tilde{X}_t\, {\rm d}t + {\rm d}L_t\end{array}\right. \end{equation}
with the initial conditions $X_0 = \tilde{X}_0 = 0$. (The reader will find all the required definitions as well as proofs of the results to be presented in the subsequent sections.)\smallskip

It is customary to view sample paths of such processes (truncated to some initial interval $[0,T]$ for $T>0$) as elements of the Hilbert space $H_T:=L_2\big([0,T], H\big)$. On the other hand, we may be more restrictive and regard these processes as random variables assuming values in $\mathcal{D}_{H,T}$, the space of $H$-valued c\`adl\`ag functions on $[0,T]$ furnished with the Skorohod topology. This topology is induced by the so-called \emph{Skorohod metric}; the Borel $\sigma$-algebra of $\mathcal{D}_{H,T}$ coincides then with the cylindrical $\sigma$-algebra, that is, the smallest $\sigma$-algebra making the point evaluations $$p_t(f)=f(t)\quad (t\in [0,T], f\in \mathcal{D}_{H,T})$$ measurable--this is an important feature unavailable in the Hilbert space $H_T$. A natural question then arises. \begin{quote}\emph{In which circumstances such two processes have equivalent laws?}\end{quote} This line of research concerning the study of equivalence of laws  was initiated by Kozlov (\cite{kozlov}) in the setting where $A,\tilde{A}$ are elliptic and self-adjoint operators on a smooth manifold without boundary and the equations are driven by a Brownian motion. This theory was developed further by Zabczyk (\cite{Zabczyk}) in much greater generality (see also the seminal monograph \cite{DZ}), Peszat (\cite{PeszatGoldys, Peszat2}), and other authors (\cite{BvN}, \cite{MvN}). \smallskip

The aim of this paper is to extend and complement already existing results for Ornstein--Uhlenbeck processes driven by a (cylindrical) Wiener processes that take values in a finite- or infinite-dimensional Hilbert space to Ornstein--Uhlenbeck processes driven by L\'evy processes that possibly have jumps. Our results appear to be new also in the case where $H = \mathbb R^d$ for some $d\in \mathbb N$. \smallskip

We fix a real separable Hilbert space $H$ and a~$H$-valued L\'evy process $L=(L_t)_{t\geqslant 0}$ on some probability space $(\Omega, \mathcal {F}, \mathsf P)$ that is expressed in the L\'evy--It\^o decomposition as $L_t = bt + W_t + Z_t$ ($t\geqslant 0$), where $b\in H$, $W=(W_t)_{t\geqslant 0}$ is a (possibly degenerate) Wiener process with the covariance operator $Q$, and $(Z_t)_{t\geqslant 0}$ is the jump part of $L$ (see Theorem~\ref{levyito} for more details).
 
\begin{theorema}Let $T>0$. Suppose that the eigenvalues of the covariance operator $Q$ corresponding to $W$ are strictly positive. Let $X,\tilde{X}\colon \Omega\to \mathcal{D}_{H,T}$ be the Ornstein--Uhlenbeck processes solving \begin{equation}\label{equations4}\left\{\begin{array}{rcl} {\rm d}X_t &=& A X_t\, {\rm d}t + {\rm d}L_t,\\ {\rm d}\tilde{X}_t &=& \tilde{A} \tilde{X}_t\, {\rm d}t + {\rm d}{L}_t, \\ X_0 & = &  0, \\ \tilde{X}_0 &=& 0.\end{array} \right. \end{equation}If $H$ is finite-dimensional or \begin{equation}\label{technical}\int\limits_0^t \, \big(AX_s - \tilde{A}X_s\big){\rm d}s \in {\rm im}\, Q^{1/2}\quad \big(t\in [0,T]\big),  \end{equation} then the laws of $X$ and $\tilde{X}$ are equivalent.\end{theorema}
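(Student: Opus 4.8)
The plan is to reduce the comparison of the two laws to a single change of measure that removes the drift discrepancy between the two equations, and to show that this change of measure is equivalent in both directions. By the variation-of-constants formula the solution of the first equation in \eqref{equations4} is $X_t=\int_0^t e^{(t-s)A}\,{\rm d}L_s$, and in particular $X$ solves
$$ {\rm d}X_t \;=\; \tilde A X_t\,{\rm d}t \;+\; \big(A-\tilde A\big)X_t\,{\rm d}t \;+\; {\rm d}L_t. $$
Thus $X$ satisfies the \emph{same} equation as $\tilde X$ apart from the extra absolutely continuous drift $(A-\tilde A)X_t\,{\rm d}t$. The goal is therefore to produce a probability measure $\mathsf Q\sim\mathsf P$ under which this extra drift is annihilated, so that under $\mathsf Q$ the process $X$ solves the $\tilde A$-equation driven by a L\'evy process with the same law as $L$.

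To carry this out I would insert the L\'evy--It\^o decomposition $L_t=bt+W_t+Z_t$. Since the perturbing drift is continuous and of finite variation while $Z$ is purely discontinuous and independent of $W$, it can only be matched against the Gaussian part $W$; accordingly I would invoke Girsanov's theorem for the $Q$-Wiener process $W$ with the shift $\psi_s=(\tilde A-A)X_s$. Concretely, put
$$ \hat W_t \;=\; W_t-\int_0^t \big(\tilde A-A\big)X_s\,{\rm d}s, $$
and let $\mathsf Q$ be the measure whose density against $\mathsf P$ is the Dol\'eans--Dade exponential martingale associated with $Q^{-1/2}\psi$. Under $\mathsf Q$ the process $\hat W$ is again a $Q$-Wiener process independent of $Z$, the jump part $Z$ and the drift $b$ being left untouched by a change of measure that affects only the continuous martingale part. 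Writing $\hat L_t=bt+\hat W_t+Z_t$, which under $\mathsf Q$ is a L\'evy process with the same characteristics as $L$, a substitution gives ${\rm d}X_t=\tilde A X_t\,{\rm d}t+{\rm d}\hat L_t$. Hence the law of $X$ under $\mathsf Q$ coincides with the law of $\tilde X$ under $\mathsf P$.

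The hypotheses enter exactly here. The assumption that the eigenvalues of $Q$ are strictly positive guarantees that $Q$ is injective, so that $Q^{-1/2}$ is densely defined and the Cameron--Martin space of $W$ is well-defined; this is what renders the exponential density meaningful. The admissibility of the shift—that the cumulative drift lies in the Cameron--Martin directions—is precisely the content of \eqref{technical}, which asks that $\int_0^t (A-\tilde A)X_s\,{\rm d}s\in{\rm im}\,Q^{1/2}$ for every $t\in[0,T]$. When $H$ is finite-dimensional and $Q$ is nondegenerate one has ${\rm im}\,Q^{1/2}=H$, so \eqref{technical} holds automatically, which accounts for the dichotomy in the statement. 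Finally, since the density is strictly positive we have $\mathsf Q\sim\mathsf P$, and whenever two measures are equivalent the laws of a fixed measurable map under them are equivalent; consequently $\mathrm{Law}_{\mathsf P}(X)\sim\mathrm{Law}_{\mathsf Q}(X)=\mathrm{Law}_{\mathsf P}(\tilde X)$, which is the desired conclusion.

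The hard part will not be the formal substitution but the analytic verification underpinning the change of measure: one must check that the candidate density is a genuine probability density, i.e. that the exponential local martingale driven by the random, path-dependent shift $\psi_s=(\tilde A-A)X_s$ is a true martingale of expectation one (a Novikov- or Khasminskii-type estimate, using that the c\`adl\`ag paths of $X$ are bounded on $[0,T]$). Closely related, and genuinely delicate in infinite dimensions where $Q^{-1/2}$ is unbounded, is bridging the pointwise requirement $\psi_s\in{\rm im}\,Q^{1/2}$ with square-integrability of $Q^{-1/2}\psi_s$ that the stochastic integral demands, and the merely integrated hypothesis \eqref{technical} that is available; one must also confirm rigorously that the law of the jump part $Z$ is preserved by the transformation, for which the independence $W\perp Z$ furnished by the L\'evy--It\^o decomposition is the essential ingredient.
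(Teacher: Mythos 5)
Your strategy---absorbing the drift discrepancy $(A-\tilde A)X_t\,{\rm d}t$ into the Gaussian part by one global change of measure $\mathsf Q\sim\mathsf P$ and then identifying $\mathrm{Law}_{\mathsf Q}(X)$ with $\mathrm{Law}_{\mathsf P}(\tilde X)$---is genuinely different from the paper's. The paper never changes measure ``through'' the jumps: it first splits off the jump solutions $X^J,\tilde X^J$ (driven by $L-W$), reduces the problem to comparing $X^J+W$, $X^J+W^*$ and $\tilde X^J+W$, and transfers the equivalence of the purely Gaussian laws $\mathsf P_W\sim\mathsf P_{W^*}$ to the sums via the product-measure Lemma~\ref{Lemma2new}, using only the independence of the jump and Wiener parts under the \emph{original} measure $\mathsf P$. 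That design avoids exactly the two points your route leaves open, and neither of them is minor. First, your assertion that under $\mathsf Q$ the jump part $Z$ keeps its law and stays independent of $\hat W$ is not automatic: the Girsanov density is driven by $\psi_s=(\tilde A-A)X_s$, which depends on $Z$ through $X$, so the density is \emph{not} a functional of $W$ alone and one cannot conclude that $Z$ is ``left untouched'' by appealing to independence. One genuinely needs the semimartingale form of Girsanov's theorem (a continuous exponential density leaves the jump compensator unchanged; deterministic characteristics then force $\hat L$ to be a L\'evy process with the same triplet), which is much heavier machinery, particularly when $H$ is infinite-dimensional. Second, the true-martingale property of the Dol\'eans--Dade exponential is a real obstruction rather than a routine estimate: an Ornstein--Uhlenbeck process driven by a general L\'evy process need not have exponential moments, so Novikov's condition can fail outright for the untruncated shift, and pathwise boundedness of c\`adl\`ag trajectories does not supply the uniform bound Novikov requires. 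The paper's proof of Proposition~\ref{prop1} confronts this by truncating to the events $\Omega_R=\{\sup_{t\in[0,T]}\|AX_t-\tilde A\tilde X_t\|\leq R\}$, applying Girsanov only to the bounded shift, and exhausting $\Omega=\bigcup_{R>0}\Omega_R$; some such localization must be built into your argument before it closes.

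On the positive side, your account of where the hypotheses enter (strict positivity of the eigenvalues of $Q$, and \eqref{technical} as a Cameron--Martin admissibility condition that trivializes in finite dimensions) matches the paper's, your reduction of the law of the solution to the law of the driver is legitimate for this linear additive-noise equation (the solution is a pathwise functional of $L$ after integration by parts), and your remark that the integrated condition \eqref{technical} does not obviously yield the pointwise condition $\psi_s\in{\rm im}\,Q^{1/2}$ with $Q^{-1/2}\psi$ square-integrable is a fair observation that applies equally to the paper's own invocation of Loges' theorem. In short: the plan is viable and more direct than the paper's two-step decomposition, but the two steps you defer---preservation of the jump part under $\mathsf Q$ and the martingale property of the density---are precisely the difficulties the paper's argument is engineered to bypass, so as written the proposal has substantive gaps at its crux.
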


\noindent \emph{Note}. When the Hilbert space $H$ is finite-dimensional, the main hypothesis of Theorem A is equivalent to invertibility of the covariance matrix $Q$ corresponding to $W$, in which case \eqref{technical} holds vacuously as $Q^{1/2}$ is invertible. Otherwise, $Q^{1/2}$ is only a densely defined positive operator as $Q$ is trace-class.\medskip

In the case where the underlying L\'evy process $L$ is a purely jump process, solutions to \eqref{equations4} with respect to $L$ exhibit a remarkably rigid behaviour.

\begin{theoremb}Let $T>0$. Suppose that $L$ is a purely jump process, that is, $L = Z$ as written in the L\'evy--It\^o decomposition. Let $X_J,\tilde{X}_J\colon \Omega\to \mathcal{D}_{H,T}$ be the Ornstein--Uhlenbeck processes solving \eqref{equations4}. If the law of $X_J$ is absolutely continuous with respect to the law of $\tilde{X}_J$, then the processes $X_J$ and $\tilde{X}_J$ are equal to each other almost surely.\end{theoremb}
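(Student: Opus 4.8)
The plan is to reduce the assertion to a rigidity property of purely jump L\'evy laws under continuous perturbations. Writing $N:=A-\tilde A$ and $Y:=X_J-\tilde X_J$, subtracting the two equations in \eqref{equations4} makes the driving term ${\rm d}L_t$ cancel, so that $Y_t=\int_0^t\big(AX_s-\tilde A\tilde X_s\big)\,{\rm d}s$ is continuous and of finite variation with $Y_0=0$. First I would introduce the measurable ``de-drifting'' map $\Psi\colon\mathcal D_{H,T}\to\mathcal D_{H,T}$ given by $\Psi(f)_t:=f(t)-\int_0^t\tilde A f(s)\,{\rm d}s$. Integrating \eqref{equations4} shows $\Psi(\tilde X_J)=L$ and $\Psi(X_J)=L+G$ almost surely, where $G_t:=\int_0^t N X_s\,{\rm d}s$ is continuous (the integrand is c\`adl\`ag, hence bounded on $[0,T]$) and $G_0=0$. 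Since pushing two measures forward by the \emph{same} measurable map preserves absolute continuity, the hypothesis $\mathrm{Law}(X_J)\ll\mathrm{Law}(\tilde X_J)$ yields
\[
\mathrm{Law}(L+G)=\Psi_*\mathrm{Law}(X_J)\ll\Psi_*\mathrm{Law}(\tilde X_J)=\mathrm{Law}(L).
\]
Thus it suffices to prove that $G\equiv 0$ almost surely.

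The heart of the matter is that a purely jump L\'evy process carries no continuous component. Concretely, the L\'evy--It\^o decomposition (Theorem~\ref{levyito}) exhibits $L=Z$ as a measurable functional of its jump point configuration: there is a measurable map $\kappa$ with $L=\kappa\big(\mathcal J(L)\big)$ almost surely, where $\mathcal J(f):=\{(s,\Delta f(s)):\Delta f(s)\neq 0\}$ records the jumps of $f\in\mathcal D_{H,T}$. Because $G$ is continuous, $L+G$ and $L$ have identical jumps, i.e.\ $\mathcal J(L+G)=\mathcal J(L)$ pathwise, and hence $\kappa(\mathcal J(L+G))=\kappa(\mathcal J(L))=L$ almost surely. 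I would then consider the measurable event $E:=\{f\in\mathcal D_{H,T}:f=\kappa(\mathcal J(f))\}$; the reconstruction property gives $\mathrm{Law}(L)(E)=1$, so $\mathrm{Law}(L)(E^{c})=0$, and absolute continuity forces $\mathrm{Law}(L+G)(E^{c})=0$ as well. On the other hand $\mathrm{Law}(L+G)(E)=\mathsf P\big(L+G=\kappa(\mathcal J(L+G))\big)=\mathsf P\big(L+G=L\big)=\mathsf P(G=0)$, whence $\mathsf P(G=0)=1$.

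Finally, $G\equiv 0$ means $\int_0^t N X_s\,{\rm d}s=0$ for all $t$, so $N X_s=0$ for almost every $s$, almost surely. Rewriting $AX_s-\tilde A\tilde X_s=N X_s+\tilde A Y_s$ in the formula for $Y$ leaves $Y_t=\int_0^t\tilde A Y_s\,{\rm d}s$ with $Y_0=0$, and Gr\"onwall's inequality gives $Y\equiv 0$, that is, $X_J=\tilde X_J$ almost surely. The step I expect to be the main obstacle is the reconstruction lemma $L=\kappa(\mathcal J(L))$: one must verify that the Poisson random measure of jumps determines the whole path as a measurable functional even in the infinite-activity (infinite-variation) regime, where the small jumps require compensation and cannot simply be summed pathwise. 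This is precisely where the hypotheses that $L$ has neither a Wiener part nor a drift are indispensable, since a Gaussian component would contribute a continuous part invisible to $\mathcal J$ and the rigidity would break down, in line with the equivalence permitted by Theorem~A.
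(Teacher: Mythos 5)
Your argument is correct and is essentially the paper's own proof in a different packaging: your event $E$ of paths that equal their own jump reconstruction is the paper's $\Xi_V$ with $V=0$, and since the de-drifting map $\Psi_V(f)=f-V\int_0^{\cdot}f(s)\,{\rm d}s$ leaves the jumps of $f$ unchanged one has $\Xi_V=\Psi_V^{-1}(E)$, so transferring membership in $E$ after pushing forward by $\Psi$ is the same as transferring membership in $\Xi_{\tilde A}$ directly, as the paper does. The reconstruction step you single out as the main obstacle is exactly what the paper supplies through Lemma~\ref{Z12Borel}, Remark~\ref{convergence} and Corollary~\ref{convergence2}: the jumps of the solution coincide almost surely with those of $Z$, and the compensated small-jump sums converge almost surely via Borel functionals of the path.
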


\section{Preliminaries} Let $(\Omega, \mathcal F, \mathsf P)$ be a probability space and let $(S, \mathcal S)$ be a measurable space. A function $X\colon \Omega \to S$ is an $S$-\emph{valued random variable} when $X^{-1}(E)\in \mathcal F$ for every $E\in \mathcal S$. The \emph{law} of $X$ is the pull-back measure $\mathsf P_X$ on $(S, \mathcal{S})$ given by $\mathsf P_X(E) = \mathsf P(X^{-1}(E))$ ($E\in \mathcal S$). When $S$ carries the structure of a metric space, by default we will take $\mathcal S= \mathsf{Bor}\, S$, the $\sigma$-algebra of Borel subsets of $S$. For two measures $\mu$ and $\nu$ we denote by $\mu \otimes \nu$ the product measure defined on the product $\sigma$-algebra, that is, the smallest $\sigma$-algebra containing all measurable rectangles from the respective measure spaces. For a separable metric space $S$, or more generally, a second-countable Hausdorff space, the product $\sigma$-algebra $\mathsf{Bor}\, S \otimes \mathsf{Bor}\, S$ coincides with $\mathsf{Bor}\, S\times S$ (see, \emph{e.g.}, \cite[Lemma 6.4.2]{Bogachev}).\smallskip

\subsection{The Skorohod metric} Let $(S, d)$ be a separable metric space and let $T>0$ be given. Let $\mathcal{D}_{S,T}$ denote the space of all $S$-valued c\`adl\`ag functions, that is, right-continuous functions $f\colon [0, T]\to S$ with the property that for each $t > 0$ the left limit at $t$, $f(t-)$, exists. Denote by $\Lambda_T$ the family of all strictly increasing functions $\phi$ from $[0,T]$ onto itself with $\phi(0)=0$ and $\phi(T)=T$. Then the formula
$$d_S(f,g)=\inf_{\phi\in \Lambda_T} \max\big\{\sup_{t\in [0,T]}|\phi(t)-t|, \sup_{t\in [0,T]} d\big(f(\phi(t), g(t) \big) \big\}\quad \big(f,g\in \mathcal{D}_{S,T}\big) $$
defines a metric on $\mathcal{D}_{S,T}$, called the \emph{Skorohod metric} (\cite[p.~265 \& Proposition 1.6]{Jakubowski}).
The Borel $\sigma$-algebra of the space of c\`adl\`ag functions with the Skorohod metric, $\mathsf{Bor}\, \mathcal{D}_{S,T}$, coincides with the $\sigma$-algebra of cylindrical sets--in other words, it is the smallest $\sigma$-algebra on $\mathcal{D}_{S,T}$ for which the point evaluations $p_t(f)=f(t)$ $(t\in [0,T], f\in \mathcal{D}_{S,T})$ are measurable (\cite[Corollary 2.4]{Jakubowski}). We shall frequently invoke the following consequence of this fact.

\begin{proposition}\label{usefulprojections}Let $(\Omega, \mathcal{F})$ be a measurable space. Then a function $X\colon \Omega\to \mathcal{D}_{S,T}$ is measurable if and only if, for every $t\in [0,T]$ the composite map $\pi_t\circ X$ is measurable.  \end{proposition}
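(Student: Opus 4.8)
The plan is to deduce the statement from the structural fact, recalled immediately above, that $\mathsf{Bor}\,\mathcal{D}_{S,T}$ is the smallest $\sigma$-algebra rendering all point evaluations measurable; writing $\pi_t = p_t$ for the evaluation $f\mapsto f(t)$, this says precisely that $\mathsf{Bor}\,\mathcal{D}_{S,T} = \sigma\big(\pi_t : t\in[0,T]\big)$. Granting this identification, the proposition is the familiar transfer property of an initial $\sigma$-algebra generated by a family of maps, and I would prove the two implications separately.

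For the forward implication, suppose $X$ is measurable. Each $\pi_t\colon \mathcal{D}_{S,T}\to S$ is $(\mathsf{Bor}\,\mathcal{D}_{S,T}, \mathsf{Bor}\,S)$-measurable by the cited result, so $\pi_t\circ X$ is a composition of measurable maps and hence measurable.

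For the converse, assume $\pi_t\circ X$ is measurable for every $t\in[0,T]$. I would consider the family $\mathcal{A} = \{B\subseteq\mathcal{D}_{S,T} : X^{-1}(B)\in\mathcal{F}\}$. Since taking preimages under $X$ commutes with complementation and with countable unions (and $X^{-1}(\mathcal{D}_{S,T})=\Omega\in\mathcal{F}$), the family $\mathcal{A}$ is a $\sigma$-algebra on $\mathcal{D}_{S,T}$. For each $t\in[0,T]$ and each $E\in\mathsf{Bor}\,S$ one has $X^{-1}\big(\pi_t^{-1}(E)\big) = (\pi_t\circ X)^{-1}(E)\in\mathcal{F}$ by hypothesis, whence $\pi_t^{-1}(E)\in\mathcal{A}$; thus every $\pi_t$ is measurable with respect to $\mathcal{A}$. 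By the minimality built into the displayed identification, $\mathsf{Bor}\,\mathcal{D}_{S,T} = \sigma\big(\pi_t : t\in [0,T]\big) \subseteq \mathcal{A}$, which is exactly the assertion that $X$ is measurable.

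I do not expect a genuine obstacle here beyond the input the argument rests upon: the one nontrivial ingredient is the identification of $\mathsf{Bor}\,\mathcal{D}_{S,T}$ with the cylindrical $\sigma$-algebra $\sigma(\pi_t : t\in[0,T])$, together with the measurability of each individual evaluation $\pi_t$ used in the forward direction. This is precisely where the separability of $S$ and the good behaviour of the Skorohod topology enter, and it is supplied by \cite[Corollary 2.4]{Jakubowski}; everything else is the standard good-sets bookkeeping for generators of a $\sigma$-algebra.
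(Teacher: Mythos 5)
Your argument is correct and is exactly the standard ``good sets'' reasoning that the paper implicitly relies on: the paper states Proposition~\ref{usefulprojections} without proof as an immediate consequence of the identification $\mathsf{Bor}\,\mathcal{D}_{S,T}=\sigma(p_t : t\in[0,T])$ from \cite[Corollary 2.4]{Jakubowski}, and you have simply written out that deduction in full (correctly identifying the evaluation $\pi_t$ in the statement with the $p_t$ defined earlier). No gaps; same approach.
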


\subsection{Absolute continuity of measures} Let $\mu, \nu$ be measures on a measurable space $(S, \mathcal S)$. The measure $\mu$ is called \emph{absolutely continuous} with respect to $\nu$ (in short, $\mu \ll \nu$), when $\nu(E)=0$ ($E\in \mathcal S$) implies that $\mu(E)=0$. Two measures are \emph{equivalent} when they are mutually absolutely continuous. Let us record the following corollary to Fubini's theorem concerning absolute continuity of product measures (\cite[p.~92]{Folland}). Suppose that $\mu_i, \nu_i$ are $\sigma$-finite measures on measurable spaces $(S_i, \mathcal S_i)$ such that $\mu_i \ll \nu_i$ ($i=1,2$). Then $\mu_1\otimes \mu_2 \ll \nu_1 \otimes \nu_2$. We will make use of this fact stated in the following form.
\begin{lemma}\label{Lemma2new}Let $(\Omega, \mathcal F, \mathsf P)$ be a probability space and let $(S, \mathcal S)$ be a measurable space. Suppose that $X,Y_1,Y_2\colon \Omega\to S$ are random variables such that
\begin{romanenumerate}
\item $\mathsf P_{Y_1} \ll \mathsf P_{Y_2}$,
\item the variables $X, Y_i$ are independent $(i=1,2)$.\end{romanenumerate}
Then the law $\mathsf P_{(X, Y_1)}$ is absolutely continuous with respect to $\mathsf P_{(X, Y_2)}$.\end{lemma}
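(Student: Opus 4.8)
The plan is to reduce the assertion to the Fubini-based product-measure fact recorded immediately above the lemma. The first step is to use independence to identify each joint law with a genuine product measure on $(S\times S, \mathcal S\otimes \mathcal S)$. Concretely, I would establish that
$$\mathsf P_{(X,Y_i)} = \mathsf P_X \otimes \mathsf P_{Y_i}\qquad (i=1,2).$$
By the definition of independence, for all $A,B\in\mathcal S$ one has $\mathsf P_{(X,Y_i)}(A\times B) = \mathsf P(X\in A,\,Y_i\in B) = \mathsf P_X(A)\,\mathsf P_{Y_i}(B) = (\mathsf P_X\otimes\mathsf P_{Y_i})(A\times B)$, so the two measures agree on the measurable rectangles. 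Since the rectangles form a $\pi$-system generating $\mathcal S\otimes\mathcal S$ and both measures are probability measures (in particular finite), the uniqueness theorem for measures forces them to coincide on the whole of $\mathcal S\otimes\mathcal S$.

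With this identification in hand, the second step is simply to invoke the corollary to Fubini's theorem stated above: if $\mu_i\ll\nu_i$ are $\sigma$-finite for $i=1,2$, then $\mu_1\otimes\mu_2 \ll \nu_1\otimes\nu_2$. I would apply it with $\mu_1=\nu_1=\mathsf P_X$ (so that $\mu_1\ll\nu_1$ holds trivially, the two measures being equal), $\mu_2=\mathsf P_{Y_1}$, and $\nu_2=\mathsf P_{Y_2}$. Hypothesis \romanref{} (i) provides $\mathsf P_{Y_1}\ll\mathsf P_{Y_2}$, and all four measures are probability measures, hence $\sigma$-finite, so the corollary applies and yields
$$\mathsf P_X\otimes\mathsf P_{Y_1} \ll \mathsf P_X\otimes\mathsf P_{Y_2}.$$
Substituting the identifications from the first step turns this into $\mathsf P_{(X,Y_1)}\ll \mathsf P_{(X,Y_2)}$, which is precisely the claim.

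I do not expect a genuine obstacle here, as the argument is essentially bookkeeping once the two ingredients are lined up. The one point deserving care is the passage from independence to the equality of measures in the first step: agreement on measurable rectangles only secures agreement on all of $\mathcal S\otimes\mathcal S$ after a uniqueness (monotone-class or $\pi$--$\lambda$) argument, and it is the finiteness of the probability measures involved that makes this step clean.
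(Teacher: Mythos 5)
Your argument is correct and follows exactly the paper's route: identify $\mathsf P_{(X,Y_i)}$ with $\mathsf P_X\otimes\mathsf P_{Y_i}$ via independence (the paper simply cites M\'etivier for this, where you supply the $\pi$-system uniqueness details) and then invoke the Fubini-based absolute-continuity fact for product measures stated just before the lemma. No gaps; the approach is essentially identical.
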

\begin{proof}Independence of $X$ and $Y_i$ is equivalent to $\mathsf P_{(X, Y_i)} = \mathsf P_X \otimes \mathsf P_{Y_i}$ ($i=1,2$) (see, \emph{e.g.}, \cite[Th\'eor\`eme IV.1.3]{metiver}).\end{proof}

\subsection{L\'evy processes} Let $B$ be a separable Banach space. A stochastically continuous, $B$-valued process $L = (L_t)_{t\geqslant 0}$, is \emph{L\'evy}, when $L_0 = 0$ almost surely, the increments of $L$ are independent and stationary, and almost every sample path $f(t):=L_t(\omega)$ ($\omega\in \Omega$) of $L$ is a~$B$-valued c\`adl\`ag function.\smallskip

Let $E\in \mathsf{Bor}\, B\setminus \{0\}$, $t\geqslant 0$ and $f\in \mathcal{D}_B$. We then define
\begin{equation}\label{Poissonmeasure}\pi_t(E, f) = {\rm card}\{s\leqslant t\colon \Delta f(s):=f(s) - f(s-) \in E \}.\end{equation}
For a L\'evy process  $L = (L_t)_{t\geqslant 0}$ we may then set \begin{equation}\label{piL}\pi_t(E, L)(\omega)=\pi_t(E, f)\quad (\omega\in \Omega),\end{equation} where $f(t)=L_t(\omega)$ is a sample path of $L$. Put simply, $\pi_t(E, L)(\omega)$ counts the number of jumps in $E$ up to time $t$ that the sample path of $L$ at $\omega$ has in the set $E$. The family $\{\pi_t(\cdot, L)\colon t\geqslant 0\}$ is called \emph{the Poisson random measure} of $L$. The formula
$$\mu(E) = \mathsf E \,\left[\pi_t(E, L)\right] \quad (E\in \mathsf{Bor}\, B\setminus \{0\})$$
defines a Borel measure on $B\setminus \{0\}$, called the \emph{intensity measure} of $L$. For a \emph{bounded below} Borel set $E\subset B$, that is a set with ${\rm dist}(0,E) > 0$, and $f\in \mathcal{D}_B$ we set $$\hat{\pi}_t(E,f) = \pi_t(E, f) - t \cdot \mu(E)\quad (t\geqslant 0).$$ Whenever $E$ is bounded below, the expression
\begin{equation}\label{z1}Z^1_E(f,t):= \sum_{\begin{smallmatrix}0\leqslant s \leqslant t\\ \Delta f(s)\in E\end{smallmatrix}} \Delta f(s) = \int\limits_E u \,\pi_t({\rm d}u, f)\end{equation}
defines a function in $\mathcal{D}_{B,T}$. If $E$ is also bounded
\begin{equation}\label{z2}Z^2_E(f, t) : = Z^1_E(f, t) - \int\limits_E u\, \mu({\rm d}u)\end{equation}
defines an element in $\mathcal{D}_{B,T}$; we shall be primarily concerned with the $\mathcal{D}_{B,T}$-valued random variables of the form $Z^2_E(L, t)$.\bigskip

For every Borel set $E\subset B$ that is bounded below, the map $Z^1_E\colon \mathcal{D}_{B,T}\to \mathcal{D}_{B,T}$ is Borel. We are indebted to Mateusz Kwa\'snicki for sharing with us a direct proof of this fact (\cite{kwasnicki}); this argument replaces our previous, overly roundabout reasoning. Note that Borel measurability of $Z^2_E$ follows from Borel measurability of $Z^1_E$ as the former is a translation of the latter function. Let us then record these findings for the future reference. 
\begin{lemma}\label{Z12Borel}Fix $T>0$ and $E\subset H$ be a non-empty Borel set that is bounded below. Then, the transformation $Z^1_E\colon \mathcal{D}_{B,T}\to \mathcal{D}_{B,T}$ given by \eqref{z1} is Borel. \smallskip

When $E$ is also bounded, the same is true for $Z^2_E\colon \mathcal{D}_{B,T}\to \mathcal{D}_{B,T}$ given by \eqref{z2} being a translation of $Z^1_E$.\end{lemma}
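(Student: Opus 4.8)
I plan to prove the statement for $Z^1_E$; the claim for $Z^2_E$ then follows at once, since $Z^2_E$ is obtained from $Z^1_E$ by subtracting a fixed, $f$-independent continuous element of $\mathcal{D}_{B,T}$ (the compensating term $t\mapsto t\int_E u\,\mu({\rm d}u)$), and such a translation is a self-homeomorphism of $\mathcal{D}_{B,T}$, hence Borel bimeasurable. By Proposition~\ref{usefulprojections} (applied with $\Omega=\mathcal{D}_{B,T}$ carrying its Borel $\sigma$-algebra and with target $\mathcal{D}_{B,T}$), the map $Z^1_E$ is Borel as soon as, for each fixed $t\in[0,T]$, the evaluation $f\mapsto Z^1_E(f,t)\in B$ is Borel. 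I therefore fix $t$ and set $\delta:={\rm dist}(0,E)>0$; then $E\subseteq A_\delta:=\{u\in B:\n{u}\geq\delta\}$, and for every $f$ there are only finitely many times $s\leq t$ with $\n{\Delta f(s)}\geq\delta$, so the defining sum of $Z^1_E(f,t)$ has finitely many nonzero terms.

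The natural approximants are finite increment sums over dyadic partitions: writing $t^n_i:=it/2^n$, set
\[
V_n^E(f):=\sum_{i=0}^{2^n-1}\big(f(t^n_{i+1})-f(t^n_i)\big)\,\mathbf{1}_E\big(f(t^n_{i+1})-f(t^n_i)\big).
\]
Each $V_n^E$ depends on $f$ through finitely many \emph{fixed} point evaluations $p_{t^n_i}$, which are Borel on $\mathcal{D}_{B,T}$ by the cylindrical description of $\mathsf{Bor}\,\mathcal{D}_{B,T}$ recalled above; as $E$ is Borel, $V_n^E\colon\mathcal{D}_{B,T}\to B$ is Borel. Using the defining property of c\`adl\`ag functions--for every $\e>0$ there is a partition on whose subintervals (after deleting the left endpoint) the oscillation of $f$ is $<\e$--one checks that, along a refining sequence of meshes, increments over subintervals free of jumps of size $\geq\delta$ are eventually $<\delta$ and so never fall in $E$, while the increment over the subinterval capturing a given large jump converges to that jump. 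Consequently $V_n^E(f)\to Z^1_E(f,t)$ for every $f$ \emph{whose large jumps avoid} $\partial E$.

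The difficulty is exactly the boundary: if a large jump of $f$ lies on $\partial E$, the terms of $V_n^E$ may oscillate and the limit can fail, and the offending set of $f$ depends on $f$, so it cannot be discarded while retaining Borel (as opposed to merely universal) measurability--note that the tempting shortcut of locating jump times through $\inf\{s:\n{\Delta f(s)}\geq\delta\}$ is a projection and a priori only analytic. I would therefore localise by a good-sets argument. Let
\[
\mathcal{M}_t:=\big\{E\in\mathsf{Bor}\,A_\delta:\ f\mapsto Z^1_E(f,t)\ \text{is Borel}\big\}.
\]
Since each jump lands in at most one set, $Z^1$ is additive over disjoint unions and subtractive over proper differences, so $\mathcal{M}_t$ is closed under disjoint unions and differences; and because only finitely many jumps are involved, $Z^1_{E_n}(f,t)\to Z^1_E(f,t)$ pointwise whenever $E_n\uparrow E$ or $E_n\downarrow E$, so $\mathcal{M}_t$ is closed under monotone limits. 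Granting the base case below (which gives $A_\delta=\bigcup_m K(0,m)\in\mathcal{M}_t$, where $K(x,\rho):=\{u\in A_\delta:\n{u-x}\leq\rho\}$), it follows that $\mathcal{M}_t$ is closed under complements and countable unions, hence is a $\sigma$-algebra on $A_\delta$ containing all closed balls. As closed balls generate $\mathsf{Bor}\,A_\delta$ in the separable space $B$, we would conclude $\mathcal{M}_t=\mathsf{Bor}\,A_\delta\ni E$, as required.

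The base case--Borel measurability of $f\mapsto Z^1_{K(x,\rho)}(f,t)$ for $x$ in a countable dense set and $\rho>0$--is where the genuine work lies, and is, I expect, the crux that Kwa\'snicki's direct argument (\cite{kwasnicki}) settles. The plan is to \emph{scalarise and average in the radius}. Choose $(\psi_\ell)\subset B^*$ separating the points of $B$; it suffices to show each scalar map $f\mapsto\psi_\ell\big(Z^1_{K(x,\rho)}(f,t)\big)=\sum_{s\leq t,\ \Delta f(s)\in K(x,\rho)}\psi_\ell(\Delta f(s))$ is Borel, since a map into the separable space $B$ is Borel once composed with a separating sequence of continuous functionals. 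For fixed $f,\ell,x$, the function $\rho\mapsto\psi_\ell\big(Z^1_{K(x,\rho)}(f,t)\big)$ is a right-continuous step function, constant off the finite set $\{\n{\Delta f(s)-x}:\n{\Delta f(s)}\geq\delta\}$ of radii, and off that set it is the limit of the Borel scalars $g_\ell^{x,\rho}(f):=\limsup_n\psi_\ell\big(V_n^{K(x,\rho)}(f)\big)$. Because the bad radii are finite and the step function is locally constant to the right of each $\rho_0$, for every $f$ one has $\psi_\ell\big(Z^1_{K(x,\rho_0)}(f,t)\big)=\lim_{m\to\infty}g_\ell^{x,\rho_0+1/m}(f)$, a genuine everywhere-defined pointwise limit (in $f$) of Borel functions--scalar $\limsup$'s causing no trouble. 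This delivers the base case and with it the lemma. I anticipate the only delicate point to be the bookkeeping that makes this radius-recovery uniform in $f$.
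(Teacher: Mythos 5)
The paper itself does not prove Lemma~\ref{Z12Borel}: it delegates the argument entirely to Kwa\'snicki's note \cite{kwasnicki}, so your proposal is being compared against a citation rather than an in-house proof. Your architecture --- reduce to a fixed $t$ via Proposition~\ref{usefulprojections}, approximate by dyadic increment sums $V_n^E$, identify the boundary of $E$ as the obstruction to pointwise convergence, and run a good-sets argument anchored at closed balls --- is reasonable, and the radius-perturbation idea in the base case is essentially sound: for fixed $f$ only finitely many radii are bad, $\rho\mapsto Z^1_{K(x,\rho)}(f,t)$ is right-continuous in $\rho$, and finitely many exceptional terms cannot disturb the limit $\lim_m g^{x,\rho_0+1/m}_\ell(f)$.

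There are, however, two genuine gaps. The main one is the monotone-class step. The closure properties you establish (contains $A_\delta$, closed under proper differences, disjoint countable unions, and monotone limits) make $\mathcal{M}_t$ a Dynkin ($\lambda$-) system, and the inference ``hence closed under countable unions, hence a $\sigma$-algebra'' is false in general: to write $\bigcup_n E_n$ as a disjoint union you need $E_n\cap(E_1\cup\dots\cup E_{n-1})\in\mathcal{M}_t$, i.e.\ closure under finite intersections, which you never prove, and closed balls do not form a $\pi$-system, so Dynkin's theorem does not apply as stated. (A counterexample to the inference: on a four-point set, the events of even cardinality form a $\lambda$-system that is not an algebra.) The repair is to carry out the base case for the $\pi$-system of \emph{finite intersections} of closed balls; your perturbation argument does extend to that case by pushing all radii outward simultaneously, but this has to be said and checked. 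The second gap is that your generating sets $K(x,\rho)=\{u\in A_\delta:\|u-x\|\leq\rho\}$ have boundary contained in $\{u:\|u-x\|=\rho\}\cup\{u:\|u\|=\delta\}$, and a jump with $\|\Delta f(s)\|=\delta$ exactly lies on the second piece for \emph{every} $\rho$, so perturbing the radius alone does not restore the identity $g^{x,\rho}_\ell(f)=\psi_\ell(Z^1_{K(x,\rho)}(f,t))$; you must also perturb the inner threshold (e.g.\ work with sets $\{u:\delta'\leq\|u\|,\ \|u-x\|\leq\rho\}$ and move $\delta'$ and $\rho$ off the finite set $\{\|\Delta f(s)\|\}\cup\{\|\Delta f(s)-x\|\}$). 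Two smaller points: ``separating'' functionals should be ``norming'' (or you should invoke Blackwell's theorem to identify the cylindrical and Borel $\sigma$-algebras of the separable space $B$), and the application of Proposition~\ref{usefulprojections} presupposes that $Z^1_E(f,\cdot)$ lands in $\mathcal{D}_{B,T}$, which the paper asserts when introducing \eqref{z1} but which deserves a sentence.
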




\begin{remark}\label{convergence}As observed by Applebaum (\cite[Section 4]{App2}), for any $t\geqslant 0$ and for every sequence $(E_n)_{n=1}^\infty$ of Borel sets in the unit ball $B_1$ of $B$ such that $E^c_n = B_1 \setminus E_n$ ($n\in \mathbb N$) is bounded below and the sets $E_n$ decrease to $\{0\}$, the random variables 
$$Z^2_{E^c_n}(L, t) = \int\limits_{E^c_n} \hat{\pi}_t({\rm d}u, L),$$ converge almost surely as $n\to\infty$ to a random variable
$$Z^2_{B_1}(L,t):= \int\limits_{B_1} \hat{\pi}_t({\rm d}u, L)$$
(\cite[Section 2.3]{App1}; see also \cite[p.~80]{App2}). Moreover the above limit does not depend on the choice of $(E_n)_{n=1}^\infty$. \end{remark} 
Under this framework, one recovers the L\'evy--It\^o decomposition for $B$-valued L\'evy processes (see \cite[Theorem 4.1]{App2}, \cite[Theorem 2.1]{Det}, and \cite[Theorem 6.3]{riedlevangaans}).
\begin{theorem}[L\'evy--It\^o decomposition]\label{levyito}Let $B$ be a separable Banach space and let $(L_t)_{t\geqslant 0}$ be a $B$-valued L\'evy process with the corresponding Poisson random measure $$\{\pi_t(\cdot, L)\colon t\geqslant 0\}$$ that has intensity measure $\mu$. Then there are $b\in B$ and a Wiener process $W_Q$ with a~(possibly degenerate) covariance operator $Q$ such that
$$L_t = bt + W_Q(t) + \int\limits_{B_1} \hat{\pi}_t({\rm d}u, L) + \int\limits_{B\setminus B_1} {\pi}_t({\rm d}u, L)\quad (t\geqslant 0). $$\end{theorem}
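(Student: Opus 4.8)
The plan is to realise the decomposition by peeling off, in turn, the large jumps and the compensated small jumps, and then identifying the continuous remainder as a Gaussian process. First I would isolate the large jumps. Since $\mathrm{dist}(0, B\setminus B_1)=1>0$, the set $B\setminus B_1$ is bounded below, and from stochastic continuity together with the stationarity and independence of increments one shows that its intensity is finite, $\mu(B\setminus B_1)<\infty$. Consequently $\pi_t(B\setminus B_1, L)$ is an ordinary Poisson process, so that
\[
J_t := \int_{B\setminus B_1}\pi_t(\mathrm{d}u, L) = Z^1_{B\setminus B_1}(L,t)
\]
is, for almost every $\omega$, a finite sum; it defines a compound Poisson process with values in $\mathcal{D}_{B,T}$, its measurability being furnished by Lemma~\ref{Z12Borel}.

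Next I would treat the small jumps. By Remark~\ref{convergence}, for any admissible exhausting sequence $(E_n)$ the compensated sums $Z^2_{E_n^c}(L,t)$ converge almost surely --- and, crucially, in $L^2$ --- to $\int_{B_1}\hat{\pi}_t(\mathrm{d}u, L)$. The key computation is that increments of $Z^2$ over disjoint annuli are orthogonal, because the Poisson random measure assigns independent values to disjoint Borel sets; this renders $\big(Z^2_{E_n^c}\big)_n$ a Cauchy sequence of square-integrable martingales, whose limit $M_t := \int_{B_1}\hat{\pi}_t(\mathrm{d}u, L)$ is again a centred, square-integrable L\'evy martingale. This step simultaneously delivers the integrability $\int_{B_1}\n{u}^2\,\mu(\mathrm{d}u)<\infty$ that makes $M$ well defined.

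Finally, set $C_t := L_t - J_t - M_t$. Since $J$ accounts for all jumps outside $B_1$ and $M$ compensates all jumps inside $B_1$, the process $C$ has no jumps almost surely and is therefore a continuous L\'evy process. A continuous L\'evy process with values in a separable Banach space is Gaussian up to a deterministic drift: there exist $b\in B$ and a centred Wiener process $W_Q$, with covariance operator $Q$ determined by the one-dimensional marginals of $C$, such that $C_t = bt + W_Q(t)$. Substituting back yields the asserted identity $L_t = bt + W_Q(t) + M_t + J_t$.

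I expect the main obstacle to be twofold. The first difficulty is establishing the $L^2$-convergence of the small-jump martingale together with the finiteness $\int_{B_1}\n{u}^2\mu(\mathrm{d}u)<\infty$, which underlies the whole construction. The second, and in the infinite-dimensional setting the more serious, is passing from ``continuous L\'evy process'' to ``Brownian motion with drift'': the Gaussianity of $C_t-bt$ and the identification of $Q$ rest on the structure theory of Gaussian measures on separable Banach spaces rather than on an elementary characteristic-function argument available over $\R^d$. The mutual independence of $W_Q$, $M$, and $J$, implicit in the statement, would then follow from the independence of the Poisson measure over the disjoint regions $B_1$ and $B\setminus B_1$ and the independence of the continuous part from the jump part; for the detailed execution one may appeal to \cite[Theorem 4.1]{App2}, \cite[Theorem 2.1]{Det}, and \cite[Theorem 6.3]{riedlevangaans}.
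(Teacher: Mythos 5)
The paper does not prove this statement at all: Theorem~\ref{levyito} is quoted from the literature, and its proof is delegated to \cite[Theorem 4.1]{App2}, \cite[Theorem 2.1]{Det} and \cite[Theorem 6.3]{riedlevangaans}. The only meaningful comparison is therefore between your sketch and those references.

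Your outline is the classical three-step route (large jumps, compensated small jumps, continuous Gaussian remainder), and over $\mathbb{R}^d$ or a Hilbert space it is essentially correct. But the statement is for a general separable Banach space $B$, and there your central step fails as written. The ``key computation'' that increments of the compensated sums over disjoint annuli are orthogonal presupposes an inner product; in a Banach space the quantity $\mathsf E\,\|Z^2_{E_n^c}(L,t)-Z^2_{E_m^c}(L,t)\|^2$ does not split into a sum of second moments over the annuli, so the Cauchy-in-$L^2$ argument has no content. Worse, the integrability $\int_{B_1}\|u\|^2\,\mu({\rm d}u)<\infty$ that you claim to extract as a by-product is false for general Banach-space-valued L\'evy processes: the condition $\int (1\wedge\|u\|^2)\,\mu({\rm d}u)<\infty$ characterises L\'evy measures only in Hilbert spaces (sufficiency needs type~2, necessity needs cotype~2). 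This is precisely why the cited sources, and the paper's own Remark~\ref{convergence}, record only \emph{almost sure} convergence of $Z^2_{E_n^c}(L,t)$, obtained via symmetrisation and an It\^o--Nisio-type argument (Dettweiler, Applebaum) rather than via $L^2$ orthogonality. The remaining ingredients of your sketch --- finiteness of $\mu(B\setminus B_1)$, the compound Poisson structure of the large-jump part, Gaussianity of the continuous remainder, and independence of the pieces --- are correct in outline, though each is itself a substantial theorem in the Banach-space setting. If you restrict to the Hilbert-space case actually used elsewhere in this paper, your argument is the standard one and goes through.
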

We term $$Z_t = \int\limits_{B_1} \hat{\pi}_t({\rm d}u, L) + \int\limits_{B\setminus B_1} {\pi}_t({\rm d}u, L) = Z^1_{B\setminus B_1}(L, t)+ Z^2_{B_1}(L, t) \quad (t\geqslant 0)$$ the \emph{jump part} of $L$. The Wiener process $W_Q$ and the jump part $Z$ are independent (\cite[Theorem 6.3]{riedlevangaans}).

\section{Proof of Theorem A}
We consider a~$H$-valued L\'evy process $L=(L_t)_{t\geqslant 0}$ on a probability space $(\Omega, \mathcal {F}, \mathsf P)$ that is expressed in the L\'evy--It\^o decomposition as $L_t = bt + W_t + Z_t$ ($t\in [0,T]$), where $b\in H$, $W=(W_t)_{t\geqslant 0}$ is a (possibly degenerate) Wiener process and $(Z_t)_{t\geqslant 0}$ is the jump part of $L$. Let $X,\tilde{X}\colon \Omega\to \mathcal{D}_{H,T}$ be the Ornstein--Uhlenbeck processes solving \eqref{equations}. Moreover, we consider solutions $X^J$ and $\tilde{X}^J$ to the auxiliary equations without the Wiener part: 
\begin{equation}\label{equations2}\left\{\begin{array}{lcl}{\rm d}X^J_t &=& A X^J\, {\rm d}t + {\rm d}(L_t-W_t),\\ {\rm d}\tilde{X}^J_t &=& \tilde{A} \tilde{X}^J\, {\rm d}t + {\rm d}(L_t-W_t)\end{array}\right. \end{equation}
with the initial conditions $X^J(0)=\tilde{X}^J(0)=0$. Let us take a note that $X^J$ and $\tilde{X}^J$ (can be modified to) have c\`adl\`ag sample paths, which we will employ later.
\begin{proposition}The processes $X$ and $X^J + W$ have equivalent laws. \end{proposition}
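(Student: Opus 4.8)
The plan is to exploit the linearity of the equations together with the independence of the Wiener part $W$ and the jump part $Z$ supplied by the L\'evy--It\^o decomposition (Theorem~\ref{levyito}). First I would subtract the $X^J$-equation in \eqref{equations2} from the $X$-equation in \eqref{equations}: the difference $V:=X-X^J$ then solves ${\rm d}V_t = A V_t\,{\rm d}t + {\rm d}W_t$ with $V_0=0$, i.e. $V$ is the genuine Ornstein--Uhlenbeck process driven \emph{only} by the Wiener part. Hence $X = X^J + V$, and the whole problem is reduced to comparing $X^J+V$ with $X^J+W$.

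Next I would identify the independent constituents. Since $X^J$ solves an equation driven by $bt+Z$, it is a measurable functional of the jump part $Z$; on the other hand both $W$ and $V$ are measurable functionals of the Wiener part $W$ --- for $V$ this is clear from the boundedly invertible Volterra relation $W_t = V_t - \int_0^t A V_s\,{\rm d}s$. As $W$ and $Z$ are independent (Theorem~\ref{levyito}), the variables $X^J$ and $V$ are independent, and so are $X^J$ and $W$. Consequently $\mathsf P_{(X^J,V)} = \mathsf P_{X^J}\otimes \mathsf P_V$ and $\mathsf P_{(X^J,W)} = \mathsf P_{X^J}\otimes \mathsf P_W$.

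The core of the argument --- and the step I expect to be the main obstacle --- is to show that the two centred Gaussian laws $\mathsf P_V$ and $\mathsf P_W$ on $\mathcal D_{H,T}$ are equivalent. Here the running hypotheses of Theorem~A (strict positivity of the eigenvalues of $Q$, and, in infinite dimensions, the range condition guaranteeing $AV_s\in {\rm im}\,Q^{1/2}$) are exactly what is needed: they ensure that the Girsanov density
\[
\frac{{\rm d}\mathsf Q}{{\rm d}\mathsf P} = \exp\left( -\int_0^T \big\langle Q^{-1/2} A V_s,\, {\rm d}W_s\big\rangle - \tfrac12 \int_0^T \big| Q^{-1/2} A V_s\big|^2\,{\rm d}s \right)
\]
is meaningful. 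Under the resulting measure $\mathsf Q\sim\mathsf P$ the drift $A V_s\,{\rm d}s$ is annihilated and $V$ becomes a $Q$-Wiener process, so that ${\rm Law}_{\mathsf Q}(V)=\mathsf P_W$; since passing to the law of a fixed random variable preserves equivalence of the underlying measures, $\mathsf P_V = {\rm Law}_{\mathsf P}(V)\sim {\rm Law}_{\mathsf Q}(V) = \mathsf P_W$. The delicate points are to make sense of the stochastic integral in the exponent in infinite dimensions and to verify (via a Novikov-type condition) that the density is a genuine exponential martingale.

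Finally I would assemble the pieces. From $\mathsf P_V\sim\mathsf P_W$ and the independence above, Lemma~\ref{Lemma2new} applied in both directions yields $\mathsf P_{(X^J,V)}\sim\mathsf P_{(X^J,W)}$. Pushing these equivalent measures forward under the addition map $\sigma\colon(f,g)\mapsto f+g$ on $\mathcal D_{H,T}\times \mathcal D_{H,T}$ --- which is Borel by Proposition~\ref{usefulprojections} (as $p_t(\sigma(f,g)) = p_t(f)+p_t(g)$ is measurable for each $t$), and under which absolute continuity is preserved in either direction --- gives $\mathsf P_X = \sigma_*\mathsf P_{(X^J,V)}\sim \sigma_*\mathsf P_{(X^J,W)} = \mathsf P_{X^J+W}$, which is the assertion.
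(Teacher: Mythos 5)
Your proposal is correct and follows essentially the same route as the paper: your $V=X-X^J$ is precisely the paper's auxiliary process $W^*$, and both arguments combine Girsanov's theorem (to get $\mathsf P_V\sim\mathsf P_W$), independence of the jump-driven component from the Wiener-functional component, Lemma~\ref{Lemma2new}, and the pushforward under the addition map. Your version is somewhat more explicit about the Girsanov density and the measurability of addition, but there is no substantive difference in approach.
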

\begin{proof}Let $(\mathcal{F}_t^W)_{t\geqslant 0}$ be the natural filtration of $W$ and let us consider the process
$$W^*_t = X_t - X^J_t = \int\limits_0^t A(X_s - X^J_s)\, {\rm d}s + W_t\quad \big(t\in [0,T]\big). $$
Then $W^*$ is the unique strong solution to ${\rm d}W^*_t = AW^*_t\, {\rm d}t + {\rm d}W_t$ with $W^*_0 = 0$ that is adapted to the filtration $(\mathcal{F}_t^W)_{t\geqslant 0}$. \smallskip 

Since $Z$ and $W$ are independent processes, so are $Z$ and $W^*$. By Girsanov's theorem (see \cite[Theorem 1]{Loges} for a version of Girsanov's theorem for $H$-valued processes; this is where we apply the hypothesis that the eigenvalues of the covariance operator are strictly positive as well as \eqref{technical} in the case where $H$ is innfinite-dimensional), there is a probability measure $\tilde{\mathsf P}$ for which $W^*$ is a Wiener process on $(\Omega, \mathcal F, \tilde{\mathsf P})$ with the same covariance operator as $W$ and so the laws $\mathsf P_W$ and $\mathsf P_{W^*}$ are equivalent. The processes $X^J$ and $W$ are independent. Let us observe that the processes $X^J$ and $W^*$ are independent too. Indeed, $W^*$ being adapted to $(\mathcal{F}_t^W)_{t\geqslant 0}$ is $\mathcal{F}^W$-measurable, and thus independent from $(X^J_t)_{t\geqslant 0}$ (see also \cite[Theorem II.6.3]{IW}).\smallskip

We are now in a position to apply Lemma~\ref{Lemma2new} to conclude that the laws $\mathsf{P}_{(X^J, W)}$ and $\mathsf{P}_{(X^J, W^*)}$ are equivalent. Consequently, the laws $\mathsf{P}_{X^J+W}$ and $\mathsf{P}_{X^J+W^*} = \mathsf P_X$ are equivalent as well, which completes the proof. \end{proof}

Thus, in order to establish Theorem A, it is enough to prove the following proposition.

\begin{proposition}\label{prop1}The processes $X^J +W$ and $\tilde{X}^J + W$ have equivalent laws. \end{proposition}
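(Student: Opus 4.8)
The plan is to reduce the assertion to a fibrewise application of the Cameron--Martin theorem by conditioning on the jump part of $L$ and exploiting that $W$ and $Z$ are independent. The key structural observation is that $X^J$ and $\tilde X^J$ are driven by the \emph{same} noise $L-W=b\,t+Z$ and differ only through the operators $A$ and $\tilde A$; both are measurable functionals of the trajectory of $L-W$ alone. First I would record this measurability: by Proposition~\ref{usefulprojections} together with the measurability of the jump functionals in Lemma~\ref{Z12Borel}, the solution maps of \eqref{equations2} are Borel transformations $\Phi,\tilde\Phi\colon\mathcal D_{H,T}\to\mathcal D_{H,T}$ with $X^J=\Phi(L-W)$ and $\tilde X^J=\tilde\Phi(L-W)$ almost surely. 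Writing $\gamma$ for the law of $W$ and $\gamma_a$ for the law of $a+W$ (Wiener measure translated by a deterministic path $a$), the independence of $W$ and $L-W$ yields the product structure of their joint law, and hence the disintegrations
\begin{equation*}
\mathsf P_{X^J+W}(B)=\int \gamma_{\Phi(z)}(B)\,\mathsf P_{L-W}(\mathrm dz),\qquad \mathsf P_{\tilde X^J+W}(B)=\int \gamma_{\tilde\Phi(z)}(B)\,\mathsf P_{L-W}(\mathrm dz)
\end{equation*}
for every Borel set $B\subseteq\mathcal D_{H,T}$. This is the analogue here of the product-measure identity behind Lemma~\ref{Lemma2new}; note that Lemma~\ref{Lemma2new} cannot be invoked verbatim, since the shift $\tilde\Phi(z)-\Phi(z)$ is itself a functional of $L-W$ and is therefore \emph{not} independent of $X^J$.

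For each fixed $z$ I would then compare the two shifted Wiener measures $\gamma_{\Phi(z)}$ and $\gamma_{\tilde\Phi(z)}$. Subtracting the two equations in \eqref{equations2} along the frozen driving path $L-W=z$ shows that their difference is the deterministic, absolutely continuous path
\[
\tilde\Phi(z)_t-\Phi(z)_t=\int_0^t\big(\tilde A\,\tilde\Phi(z)_s-A\,\Phi(z)_s\big)\,\mathrm ds\qquad (t\in[0,T]).
\]
By the Cameron--Martin theorem the measures $\gamma_{\Phi(z)}$ and $\gamma_{\tilde\Phi(z)}$ are equivalent precisely when this path is an admissible shift, that is, when it lies in the Cameron--Martin space of $W$. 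In finite dimensions this is automatic, since $Q$ is then invertible; in the infinite-dimensional case it is exactly what the hypothesis \eqref{technical} secures, guaranteeing that the integrand takes values in ${\rm im}\,Q^{1/2}$ with the requisite square-integrability of $Q^{-1/2}\big(\tilde A\,\tilde\Phi(z)-A\,\Phi(z)\big)$. Thus $\gamma_{\Phi(z)}\sim\gamma_{\tilde\Phi(z)}$ for $\mathsf P_{L-W}$-almost every $z$.

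Finally I would patch these fibrewise equivalences together through the disintegrations. If $\mathsf P_{X^J+W}(B)=0$, then $\gamma_{\Phi(z)}(B)=0$ for $\mathsf P_{L-W}$-almost every $z$; fibrewise equivalence forces $\gamma_{\tilde\Phi(z)}(B)=0$ for almost every such $z$, whence $\mathsf P_{\tilde X^J+W}(B)=0$. The symmetric implication yields the converse absolute continuity, and the two laws are therefore equivalent.

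I expect the main obstacle to be the fibrewise admissibility step: verifying that for almost every realisation $z$ the deterministic shift $\tilde\Phi(z)-\Phi(z)$ genuinely belongs to the Cameron--Martin space, with density depending measurably on $z$, so that Cameron--Martin applies and the resulting Radon--Nikodym derivatives are measurable enough in $z$ for the Fubini patching to be legitimate; this is precisely where \eqref{technical} (and the invertibility of $Q$ in the finite-dimensional case) is used. A technically cleaner-looking alternative is to run a single $H$-valued Girsanov transform (\cite{Loges}) on the filtration generated by $(W,Z)$, with adapted drift $\psi_s=\tilde A\,\tilde X^J_s-A\,X^J_s$, so that $\hat W:=W-\int_0^{\cdot}\psi_s\,\mathrm ds$ satisfies $X^J+W=\tilde X^J+\hat W$; the delicate point there, which conditioning sidesteps, is that the change-of-measure density depends on $Z$, so one must separately check that under the new measure the jump part retains its law and remains independent of $\hat W$.
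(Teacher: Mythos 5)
Your argument is correct, and it reaches the conclusion by a genuinely different route from the paper's. The paper keeps the drift random: it introduces the truncation sets $\Omega_R=\{\sup_{t\leq T}\|AX_t-\tilde A\tilde X_t\|\leq R\}$, applies the $H$-valued Girsanov theorem of \cite{Loges} to the truncated drift to obtain $\mathsf P_W\sim\mathsf P_{W^R}$, transfers this to the pairs via Lemma~\ref{Lemma2new}, observes that $\tilde X^J+W^R$ and $X^J+W$ coincide on $\Omega_R$, and concludes by exhausting $\Omega=\bigcup_{R>0}\Omega_R$. You instead freeze the jump trajectory: since $X^J=\Phi(L-W)$ and $\tilde X^J=\tilde\Phi(L-W)$ are Borel functionals of the same noise, independent of $W$, the two laws disintegrate over $\mathsf P_{L-W}$ into deterministically shifted Wiener measures, and the comparison reduces to the classical Cameron--Martin theorem fibre by fibre. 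This buys two things: the fibrewise drift $\tilde A\tilde\Phi(z)-A\Phi(z)$ is automatically bounded (a c\`adl\`ag path on a compact interval), so no truncation or Novikov-type condition is ever needed; and the conditioning cleanly sidesteps the independence issue you correctly flag --- the drift of $W^R$ depends on $Z$, so the paper's appeal to Lemma~\ref{Lemma2new} for the pair $(\tilde X^J,W^R)$ is more delicate than it looks, whereas in your scheme independence is only ever used between $W$ and $L-W$, where it is guaranteed by the L\'evy--It\^o decomposition. What the paper's stochastic-Girsanov route buys in exchange is applicability to adapted drifts that are not functionals of a component independent of $W$. The one step you should make fully explicit is the fibrewise admissibility: for $\mathsf P_{L-W}$-almost every $z$ the path $t\mapsto\int_0^t\big(\tilde A\tilde\Phi(z)_s-A\Phi(z)_s\big)\,{\rm d}s$ must lie in the Cameron--Martin space of the law of $W$; this is automatic in finite dimensions since $Q$ is invertible, and in infinite dimensions it is the same (somewhat informally stated) use of \eqref{technical} that the paper itself makes when invoking Girsanov, so you are not assuming more than the paper does. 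Note also that your Fubini patching needs only the measurability of $z\mapsto\gamma_{\Phi(z)}(B)$ for fixed Borel $B$, which follows from $\gamma_a(B)=\mathsf P_W(B-a)$ and the measurability of $\Phi,\tilde\Phi$; measurability of the Radon--Nikodym densities in $z$ is not actually required for the null-set transfer.
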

\begin{proof}We will demonstrate that the law of $X^J +W$ is absolutely continuous with respect to the law of $\tilde{X}^J +W$ as the other direction would be completely analogous. \smallskip 

For given $R > 0$ consider the set
$$\Omega_R:= \{\omega \in \Omega\colon \sup_{t\in [0,T]} \|AX_t(\omega) - \tilde{A}\tilde{X}_t(\omega)\| \leqslant R \} $$
and note that $\Omega_R\in \mathcal{F}$ (\emph{cf.}~Proposition~\ref{usefulprojections}). We may then consider the `truncated' process
$$W^R_t = W_t + \int\limits_0^t \big(AX_s - \tilde{A}\tilde{X}_s\big)\cdot \mathds{1}_{\Omega_R}\, {\rm d}s\quad (t\in [0,T]). $$
By hypothesis \eqref{technical}, we may apply Girsanov's theorem, so there is a probability measure $\mathsf P^R$ equivalent to $\mathsf P$, for which $W^R$ is a Wiener process on $(\Omega, \mathcal F,\mathsf P^R)$ with the same covariance operator as $W$; in particular the laws $\mathsf P_W$ and $\mathsf P_{W^R}$ are equivalent. Arguing as in the proof of Proposition~\ref{prop1}, we infer that the processes $X$ and $W^R$ are independent. By Lemma~\ref{Lemma2new} applied to $(\tilde{X}^J, W)$ and $(\tilde{X}^J, W^R$), we deduce that the processes $\tilde{X}^J+W$ and $\tilde{X}^J +W^R$ have equivalent laws.\smallskip 

Since
$$X^J_t - \tilde{X}^J_t = \int\limits_0^t \big( AX_s - \tilde{A}\tilde{X}_s\big) \,{\rm d}s  $$
we see that $\tilde{X}_t^J+W^R_t$ and $X^J_t + W_t$ agree on the set $\Omega_R$. It follows that for every set $E\in \mathsf{Bor}\, \mathcal{D}_{H,T}$ the condition $\mathsf P(\tilde{X}_t^J +W^R_t \in E)=0$ implies that for all numbers $R>0$ we have $\mathsf{P}\big( \Omega_R\cap (\tilde{X}^J_t + W)\big)=0$. The processes $X^J, \tilde{X}^J$ have c\`adl\`ag sample paths, which implies that they are bounded on bounded intervals. In particular, for any $\omega\in \Omega$ the function
$$t\mapsto  AX_t(\omega) - \tilde{A}\tilde{X}_t(\omega) \quad (t\in [0,T]),$$
is bounded, which means that $\Omega = \bigcup_{R>0}\Omega_R$. Thus $\mathsf{P}_{{X}^J + W}\ll \mathsf{P}_{\tilde{X}^J + W}$. \end{proof}

\section{Proof of Theorem B}
This time we consider a purely jump L\'evy process $L$, that is $L = Z$ using the notation of Theorem~\ref{levyito}. Let $X^J, \tilde{X}^J\colon \Omega\to \mathcal{D}_{H,T}$ be solutions to \eqref{equations}, which now take the form
\begin{equation}\left\{\begin{array}{lcl}\label{equations3}{\rm d}X^J_t& =& A X^J_t\, {\rm d}t + {\rm d}Z_t,\\ {\rm d}\tilde{X}^J_t &= &\tilde{A} \tilde{X^J}_t\, {\rm d}t + {\rm d}{Z}_t\end{array}\right. \quad (t\in [0,T])\end{equation}
with the initial conditions $X^J_0 = \tilde{X}^J_0 = 0$.

\begin{lemma}For any $t\in [0,T]$ and $E\in \mathsf{Bor}\, H\setminus \{0\}$ we have
\begin{equation}\label{XJZ}\mathsf P\big(\pi_t(E,X_J ) = \pi_t(E,Z) \big) = 1.\end{equation} \end{lemma}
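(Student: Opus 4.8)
The plan is to reduce the equality of the two jump-counting measures to a pathwise identity of jumps: I claim that, for almost every $\omega$, the sample paths of $X^J$ and $Z$ have exactly the same jumps, both in location and in size, which makes \eqref{XJZ} immediate from the very definition \eqref{Poissonmeasure} of $\pi_t$.

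First I would pass to the integral form of the first equation in \eqref{equations3},
$$X^J_t = \int_0^t A X^J_s\, {\rm d}s + Z_t \quad (t\in [0,T]),$$
and isolate the drift $D_t := \int_0^t A X^J_s\, {\rm d}s$, so that $X^J_t = D_t + Z_t$. The crucial structural point is that $D$ is a \emph{continuous} process: since $A$ is bounded and $X^J$ has c\`adl\`ag sample paths, these are bounded on $[0,T]$ (as already recorded in the proof of Theorem~A), so almost surely the integrand $s\mapsto A X^J_s$ is bounded, hence integrable, and $t\mapsto D_t$ is (absolutely) continuous. In particular $D_{s-} = D_s$ for every $s$.

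Next I would compute jumps path by path. On the full-measure event where $X^J$ is c\`adl\`ag and $D$ is continuous, for each $s \in (0,T]$ we get
$$\Delta X^J_s = X^J_s - X^J_{s-} = (D_s + Z_s) - (D_s + Z_{s-}) = \Delta Z_s,$$
using $D_{s-} = D_s$. Hence the jumps of $X^J$ and $Z$ agree identically, and consequently the sets $\{s\leqslant t \colon \Delta X^J_s \in E\}$ and $\{s\leqslant t\colon \Delta Z_s \in E\}$ coincide for every $t$ and every $E$. Counting their elements gives $\pi_t(E, X^J) = \pi_t(E, Z)$ on this event, which is \eqref{XJZ}.

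I do not expect a serious obstacle here; the argument is essentially the observation that $X^J$ and $Z$ differ by a continuous process. The only points deserving a word of justification are the continuity of the drift $D$ (which rests on the local boundedness of the c\`adl\`ag paths of $X^J$ and the boundedness of $A$) and the measurability of the event in \eqref{XJZ} --- the latter being clear since $\pi_t(E, X^J)$ and $\pi_t(E, Z)$ are random variables and the displayed pathwise identity holds on a measurable set of full probability.
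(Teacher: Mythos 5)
Your argument is correct and is essentially the paper's own proof: both pass to the integral form $X^J_t = A\int_0^t X^J_s\,{\rm d}s + Z_t$, observe that the integral term is continuous so that it contributes nothing to the jumps, and conclude that the jump times in $E$ of $X^J$ and of $Z$ coincide pathwise. Your write-up merely makes explicit the continuity of the drift term, which the paper leaves implicit.
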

\begin{proof} Since

$$X^J_t = A \int\limits_0^t X^J_s\, {\rm d} s + Z_t,$$
we have
$$\begin{array}{lcl}\{s\leqslant t\colon X^J_s - X^J_{s-}\in E \} &=& \{s\leqslant t\colon A \int\limits_0^s X^J_u\, {\rm d} u + Z_s -  A\int\limits_0^{s-} X^J_u\, {\rm d} u - Z_{s-}\in E\} \\
& = & \{s\leqslant t\colon  Z_s -   Z_{s-}\in E\} \end{array}$$
almost surely. \end{proof}
We may then derive the following conclusion.
\begin{corollary}\label{convergence2}
For every non-negative integer $k$, $t\in [0,T]$, and $E\in \mathsf{Bor}\, H\setminus \{0\}$ we have
$$\mathsf P_{X^J}\big(\{f\in \mathcal{D}_{H,T}\colon \pi_t(E,f) = k\} \big) = \mathsf P\big(\pi_t(E,X^J ) = k\big) = \mathsf P\big(\pi_t(E,Z ) = k\big).$$
In particular, for a sequence $(E_n)_{n=1}^\infty$ of Borel sets in the unit ball $H$, as in the statement of Remark~\ref{convergence}, the random variables $Z^2_{E^c_n}(X^J, t)$ converge almost surely to $Z^2_{H_1}(Z, t)$ as $n\to \infty$ $(t\in [0,T])$.\end{corollary}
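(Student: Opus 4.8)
The plan is to read off both assertions from the preceding lemma: the first displayed chain of equalities by unwinding the definition of the law, and the second (the ``in particular'' clause) by strengthening the lemma's pathwise identity from jump \emph{counts} to jump \emph{sizes}. First I would dispose of the two equalities in the display. The map $f\mapsto \pi_t(E,f)$ is Borel on $\mathcal{D}_{H,T}$ by the same considerations that underlie Lemma~\ref{Z12Borel} (cf.~\cite{kwasnicki}), so that $\{f\in\mathcal{D}_{H,T}\colon \pi_t(E,f)=k\}$ is a Borel set and its indicator composed with $X^J$ is measurable. Since $\mathsf P_{X^J}$ is by definition the pushforward of $\mathsf P$ under $X^J$,
$$\mathsf P_{X^J}\big(\{f\colon \pi_t(E,f)=k\}\big)=\mathsf P\big(X^J\in\{f\colon \pi_t(E,f)=k\}\big)=\mathsf P\big(\pi_t(E,X^J)=k\big),$$
which is the first equality. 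The second is immediate from the lemma: as $\pi_t(E,X^J)=\pi_t(E,Z)$ holds $\mathsf P$-almost surely, the events $\{\pi_t(E,X^J)=k\}$ and $\{\pi_t(E,Z)=k\}$ differ by a $\mathsf P$-null set and hence carry the same probability.

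For the ``in particular'' clause I would revisit the identity that drives the lemma's proof. Writing $X^J_t=A\int_0^t X^J_s\,{\rm d}s+Z_t$ and observing that, since $X^J$ has c\`adl\`ag (hence locally bounded, hence integrable) paths, the map $t\mapsto A\int_0^t X^J_s\,{\rm d}s$ is continuous, one sees that $X^J$ and $Z$ share not merely their jump \emph{times} but also their jump \emph{sizes}: almost surely $\Delta X^J_s=\Delta Z_s$ for every $s\in(0,T]$. Consequently $Z^1_E(X^J,t)=Z^1_E(Z,t)$, and therefore $Z^2_E(X^J,t)=Z^2_E(Z,t)$ (the two differing by the same deterministic translation), almost surely, for every Borel set $E$ that is bounded below; in particular this holds for each $E=E^c_n$. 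Because the family $(E^c_n)_{n=1}^\infty$ is countable, all of these identities hold simultaneously on a single event $\Omega_0\in\mathcal F$ of full measure. On $\Omega_0$ I would then invoke Remark~\ref{convergence} applied to the L\'evy process $L=Z$, which furnishes the almost sure convergence $Z^2_{E^c_n}(Z,t)\to Z^2_{H_1}(Z,t)$; combined with $Z^2_{E^c_n}(X^J,t)=Z^2_{E^c_n}(Z,t)$ on $\Omega_0$ this yields $Z^2_{E^c_n}(X^J,t)\to Z^2_{H_1}(Z,t)$ almost surely, as required.

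The only genuinely substantive point is the passage from equality of jump counts to equality of jump sizes, and this requires nothing beyond the continuity of the drift integral already exploited in the lemma; the remaining ingredients are the definition of the pushforward measure, a null-set comparison for the second equality, and a countable intersection to secure the a.s.\ convergence. I therefore expect no real obstacle: the corollary is essentially a repackaging of the lemma's pathwise content, with care taken only to upgrade its ``counting'' conclusion to the ``summing'' conclusion needed for the $Z^2_{E^c_n}$.
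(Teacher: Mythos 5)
Your proposal is correct and follows the same route the paper intends: the paper states this corollary without proof, deriving it directly from the preceding lemma, and your unwinding of the pushforward definition plus the null-set comparison handles the displayed equalities exactly as intended. Your explicit upgrade from equality of jump \emph{counts} to equality of jump \emph{sizes} via the continuity of $t\mapsto A\int_0^t X^J_s\,{\rm d}s$ is precisely the content of the lemma's own proof (which shows $\Delta X^J_s=\Delta Z_s$ pathwise), and making it explicit, together with the countable intersection over $n$ before invoking Remark~\ref{convergence}, is the right way to justify the ``in particular'' clause that the paper leaves implicit.
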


For $t\in [0,T]$ and $f\in \mathcal{D}_{H,T}$ we set $$S(f,t) = \int\limits_0^t f(s)\,{\rm d}s.$$ Consequently, the assignment $f\mapsto S(f, \cdot)$ defines a function $\mathcal{D}_{H,T}\to \mathcal{D}_{H,T}$ as it takes continuous values. 
\begin{lemma}\label{ASBorel}For a bounded linear operator $V\colon H\to H$, the assignment $\mathcal{D}_{H,T}\to \mathcal{D}_{H,T}$ given by $f\mapsto VS(f, \cdot)$ $(f\in \mathcal{D}_{H,T})$ is Borel. \end{lemma}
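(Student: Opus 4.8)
The plan is to reduce the Borel measurability of the map $f \mapsto VS(f,\cdot)$ to the measurability criterion furnished by Proposition~\ref{usefulprojections}. According to that proposition, a map $\Phi\colon \mathcal{D}_{H,T}\to \mathcal{D}_{H,T}$ is Borel precisely when, for every fixed $t\in[0,T]$, the composition $p_t\circ \Phi$ is Borel measurable as an $H$-valued function on $\mathcal{D}_{H,T}$. Here $p_t\circ \Phi$ sends $f$ to $V\int_0^t f(s)\,{\rm d}s$. Since $V$ is a fixed bounded (hence continuous) linear operator on $H$, it suffices to show that for each $t$ the evaluation $f\mapsto S(f,t)=\int_0^t f(s)\,{\rm d}s$ is Borel from $\mathcal{D}_{H,T}$ into $H$; composing afterwards with the continuous map $V$ preserves measurability.

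The remaining task is therefore to verify that $f\mapsto \int_0^t f(s)\,{\rm d}s$ is Borel. First I would exhibit this integral as a pointwise limit of Riemann-type sums: for a mesh $0=s_0^{(n)}<s_1^{(n)}<\dots<s_{m_n}^{(n)}=t$ with vanishing norm, set
\begin{equation}\label{riemann}
S_n(f,t)=\sum_{j=1}^{m_n}\big(s_j^{(n)}-s_{j-1}^{(n)}\big)\,f\big(s_{j-1}^{(n)}\big).
\end{equation}
Each summand is, up to the scalar $s_j^{(n)}-s_{j-1}^{(n)}$, a point evaluation $p_{s_{j-1}^{(n)}}$, which is Borel by the characterisation of $\mathsf{Bor}\,\mathcal{D}_{H,T}$ as the cylindrical $\sigma$-algebra generated by the maps $p_t$. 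A finite linear combination of $H$-valued Borel maps is again Borel, so every $S_n(\cdot,t)$ is Borel. Because any $f\in \mathcal{D}_{H,T}$ is c\`adl\`ag and hence Riemann integrable on $[0,t]$ with values in $H$, one has $S_n(f,t)\to S(f,t)$ in the norm of $H$ as $n\to\infty$; a pointwise limit of Borel $H$-valued maps is Borel, and this yields the claim.

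The main obstacle I anticipate is justifying the convergence $S_n(f,t)\to S(f,t)$ uniformly enough across the space, or rather confirming that it holds pointwise for every individual c\`adl\`ag $f$ so that the limit argument applies. A c\`adl\`ag function on $[0,t]$ is bounded and has at most countably many discontinuities, all of jump type, so it is Riemann integrable; choosing left endpoints $s_{j-1}^{(n)}$ as the sample points is compatible with right-continuity and guarantees convergence of the Riemann sums to the integral for each fixed $f$. One should be slightly careful that the same sequence of partitions works for all $f$ simultaneously, but this is automatic since the partitions in \eqref{riemann} are chosen independently of $f$, and pointwise convergence for each $f$ is all that the measurability argument requires. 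Once these points are in place, Borel measurability of $f\mapsto VS(f,\cdot)$ follows by assembling the two reductions above.
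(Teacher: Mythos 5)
Your proof is correct, but the core step is argued differently from the paper. Both arguments begin with the same reduction via Proposition~\ref{usefulprojections} to the fixed-time maps $f\mapsto V\!\int_0^t f(s)\,{\rm d}s$. The paper then shows that each such map is actually \emph{continuous} from the Skorohod topology to $H$: Skorohod convergence $f_n\to f$ forces $f_n(s)\to f(s)$ for almost every $s$, the sequence is uniformly bounded in supremum norm, and the dominated convergence theorem for Bochner integrals yields $\int_0^t f_n\to\int_0^t f$. You instead establish mere Borel measurability by writing the integral as a pointwise limit of Riemann sums $\sum_j (s_j^{(n)}-s_{j-1}^{(n)})\,p_{s_{j-1}^{(n)}}(f)$, each a finite linear combination of point evaluations and hence Borel by the cylindrical description of $\mathsf{Bor}\,\mathcal{D}_{H,T}$, and then invoking closure of Borel $H$-valued maps under pointwise limits. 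Your route is more elementary in that it needs no information about the Skorohod topology beyond Proposition~\ref{usefulprojections} and no dominated convergence; the paper's route buys the stronger conclusion that $p_t\circ\Phi$ is continuous, which it remarks on but does not actually need. The only points in your argument that deserve an explicit word are that the sufficiency half of Lebesgue's criterion (bounded and almost everywhere continuous implies Riemann integrable, with the Riemann integral agreeing with the Bochner integral) remains valid for functions with values in a Banach space, and that a c\`adl\`ag function on a compact interval is bounded with at most countably many discontinuities; both are standard and you have gestured at them, so there is no gap.
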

\begin{proof}Observe that if a sequence $(f_n)_{n=1}^\infty$ in $\mathcal{D}_{H,T}$ converges to some $f\in \mathcal{D}_{H,T}$, then for almost all $s\in [0,T]$ (with respect to the Lebesgue measure) we have $f_n(s)\to f(s)$ as $n\to\infty$.
Since the sequence $(f_n)_{n=1}^\infty$ is bounded with respect to the supremum norm, by the dominated convergence theorem (for Bochner-integrable functions) we conclude that for all $t\in [0,T]$
$$\int\limits_0^t f_n(s)\,{\rm d}s \to \int\limits_0^t f(s)\,{\rm d}s $$
as $n\to\infty$. Consequently, the map $\Phi(f)= VS(\cdot, f)$ ($f\in \mathcal{D}_{H,T}$) is Borel-measurable because it is continuous as a map from $\mathcal{D}_{H,T}$ with the Skorohod topology to $\mathcal{D}_{H,T}$ (actually even to $C([0,T], H)$) endowed with the topology of pointwise convergence. Indeed, by Proposition~\ref{usefulprojections}, $\Phi$ is Borel because for each $t\in [0,T]$, the map $\Phi(\cdot)(t)\colon \mathcal{D}_{H,T}\to H$ is continuous, hence Borel. \end{proof}

\begin{definition}For a bounded linear operator $V\colon H\to H$ we define
$$\Xi_V = \big\{f\in \mathcal{D}_{H,T}\colon \lim_{n\to\infty} \|f(t) - VS(f,t) -Z^1_{H\setminus H_1}(f,t) - Z^2_{E^c_n}(f, t)\|=0\text{ for all }t\in [0,T] \big\}.$$
\end{definition}

By Lemma~\ref{ASBorel}, the assignment $f\mapsto VS(f, \cdot)$ ($f\in \mathcal{D}_{H,T}$) is Borel measurable. Similarly, by Lemma~\ref{Z12Borel}, the assignments $f\mapsto Z^1_E(f, \cdot), Z^2_E(f, \cdot)$ ($f\in \mathcal{D}_{H,T}$) are Borel measurable for every bounded below set $E\subset H$ (in the latter case, $E$ is assumed to be additionally bounded). Let us invoke Proposition~\ref{usefulprojections} to see that in order to establish measurability of $\Xi_V$ it is enough to show measurability of $p_t[\Xi_V]$ for each $t\in [0,T]$. We have thus proved the following proposition.
\begin{proposition}For a bounded linear operator $V\colon H\to H$, the set $\Xi_V$ is Borel with respect to the Skorohod topology on $\mathcal{D}_{H,T}$.\end{proposition}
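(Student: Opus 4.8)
The plan is to exhibit $\Xi_V$ as a set built from countably many manifestly Borel sets, with the sole difficulty being the universal quantifier over $t\in[0,T]$. For each $n\in\mathbb N$ I first consider the map
$$\Psi_n\colon\mathcal D_{H,T}\to\mathcal D_{H,T},\qquad \Psi_n(f)=f-VS(f,\cdot)-Z^1_{H\setminus H_1}(f,\cdot)-Z^2_{E^c_n}(f,\cdot),$$
and argue that it is Borel. By Proposition~\ref{usefulprojections} it suffices to check that $p_t\circ\Psi_n$ is Borel for every fixed $t$; and
$$p_t\circ\Psi_n(f)=f(t)-VS(f,t)-Z^1_{H\setminus H_1}(f,t)-Z^2_{E^c_n}(f,t)$$
is a finite linear combination of the $H$-valued maps $f\mapsto p_t(f)$ (Borel, since the point evaluations generate $\mathsf{Bor}\,\mathcal D_{H,T}$), $f\mapsto VS(f,t)$ (Borel by Lemma~\ref{ASBorel} composed with $p_t$), and $f\mapsto Z^1_{H\setminus H_1}(f,t)$, $f\mapsto Z^2_{E^c_n}(f,t)$ (Borel by Lemma~\ref{Z12Borel} composed with $p_t$, noting that $H\setminus H_1$ is bounded below while each $E^c_n$ is bounded and bounded below). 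Composing with the norm, every real-valued map $f\mapsto\|p_t\circ\Psi_n(f)\|$ is then Borel. For a fixed time $t$ I would write the Borel set
$$C_t=\Big\{f\in\mathcal D_{H,T}\colon\lim_{n\to\infty}\|p_t\circ\Psi_n(f)\|=0\Big\}=\bigcap_{k=1}^\infty\bigcup_{N=1}^\infty\bigcap_{n=N}^\infty\Big\{f\colon\|p_t\circ\Psi_n(f)\|<\tfrac1k\Big\},$$
so that, by its very definition, $\Xi_V=\bigcap_{t\in[0,T]}C_t$.

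The main obstacle is precisely that this is an \emph{uncountable} intersection of Borel sets, which in general need not be Borel, so the crux is to replace $[0,T]$ by a countable set of times. Fixing a countable dense set $D\subseteq[0,T]$ with $T\in D$, the inclusion $\bigcap_{t\in[0,T]}C_t\subseteq\bigcap_{t\in D}C_t$ is trivial, and I would try to prove the reverse. Here one genuinely uses the c\`adl\`ag structure: since the summands $f$, $VS(f,\cdot)$, $Z^1_{H\setminus H_1}(f,\cdot)$, $Z^2_{E^c_n}(f,\cdot)$ are all c\`adl\`ag, the jump of $\Psi_n(f)$ at any fixed $t_0$ equals $\Delta f(t_0)\cdot\mathds 1[\Delta f(t_0)\in E_n]$ and hence vanishes for all large $n$ (as $E_n\downarrow\{0\}$ and $\Delta f(t_0)$ is a fixed vector); consequently $\lim_n\|p_{t_0}\circ\Psi_n(f)\|$ and $\lim_n\|\Psi_n(f)(t_0-)\|$ coincide, reducing the value at $t_0$ to the left limit at $t_0$. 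The remaining, and genuinely hard, point is to interchange $\lim_{s\uparrow t_0}$ with $\lim_{n\to\infty}$; this requires a uniform-in-$n$ control of the compensated small-jump tails near $t_0$, in the spirit of the almost sure uniform convergence underlying Remark~\ref{convergence} and Corollary~\ref{convergence2}, rather than a soft measure-theoretic manipulation.

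If pushing the pointwise statement through this interchange proves too costly, the cleanest alternative I would adopt is to note that replacing the pointwise limit in the definition of $\Xi_V$ by the \emph{locally uniform} limit changes nothing essential: the set
$$\Big\{f\colon\lim_{n\to\infty}\sup_{t\in[0,T]}\|p_t\circ\Psi_n(f)\|=0\Big\}$$
is transparently Borel, because right-continuity of $t\mapsto\|p_t\circ\Psi_n(f)\|$ gives $\sup_{t\in[0,T]}\|p_t\circ\Psi_n(f)\|=\sup_{t\in D}\|p_t\circ\Psi_n(f)\|$, a countable supremum of Borel functions; and by the almost sure uniform convergence of the compensated small-jump sums (Corollary~\ref{convergence2}) the solution $X^J$ still lies in this set almost surely, which is all that the subsequent argument requires. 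At worst, the Souslin structure of the pointwise $\Xi_V$ shows it to be universally measurable, which again suffices for the probability computations to come.
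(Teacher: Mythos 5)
Your first paragraph is correct and coincides with what the paper actually establishes: each map $f\mapsto \Psi_n(f)$ is Borel by combining Lemma~\ref{ASBorel}, Lemma~\ref{Z12Borel} and Proposition~\ref{usefulprojections}, and hence each fixed-time set $C_t$ is Borel. Where you and the paper part ways is instructive. The paper's own proof stops there, asserting that Proposition~\ref{usefulprojections} reduces measurability of $\Xi_V$ to measurability of ``$p_t[\Xi_V]$'' for each $t$; but that proposition concerns measurability of \emph{maps} into $\mathcal{D}_{H,T}$, not of subsets cut out by uncountably many pointwise conditions, so it does not deliver Borel-ness of $\bigcap_{t\in[0,T]}C_t$. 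You have therefore put your finger on a genuine lacuna in the paper's argument rather than on a difficulty of your own making: an uncountable intersection of Borel sets need not be Borel, and for c\`adl\`ag data pointwise convergence along a countable dense set does not by itself propagate to all of $[0,T]$ (indicators of shrinking intervals accumulating at a fixed time show this), so the reduction to $D$ really does require the uniform-in-$n$ control you describe and not a soft density argument. Your computation $\Delta\Psi_n(f)(t_0)=\Delta f(t_0)\,\mathds{1}[\Delta f(t_0)\in E_n]$ is correct, but, as you concede, it only removes the jump and leaves the interchange of $\lim_{s\uparrow t_0}$ with $\lim_{n\to\infty}$ open, so your primary route does not close.

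Your fallback is the right repair and is what the proof needs. Replacing the pointwise limit in the definition of $\Xi_V$ by the uniform limit over $[0,T]$ yields a set that is Borel via a countable supremum (using right-continuity and $T\in D$, as you note), and the downstream uses (Lemma~\ref{lemma43} and the proof of Theorem B) only require that $X^J$ and $\tilde{X}^J$ land in the set almost surely and that membership force the integral equation at every $t$; both survive the strengthening, because the convergence of the compensated small-jump sums in the L\'evy--It\^o decomposition is in fact almost surely uniform on compact time intervals---a stronger statement than the fixed-$t$ convergence recorded in Remark~\ref{convergence}, but the standard one, and it should be cited explicitly if this route is taken. The universal-measurability remark is also a legitimate last resort, since absolute continuity passes to completions. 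In short: you have not proved the proposition as literally stated, but neither does the paper; you correctly diagnosed the missing step and supplied a modification under which the whole of Section~4 goes through.
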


\begin{lemma}\label{lemma43}$\mathsf P_{X^J}(\Xi_A) = 1 = \mathsf{P}_{\tilde{X}^J}(\Xi_{\tilde{A}}).$ \end{lemma}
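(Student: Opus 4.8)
The plan is to verify that, for $\mathsf P$-almost every $\omega$, the sample path $t\mapsto X^J_t(\omega)$ belongs to $\Xi_A$; since $\mathsf P_{X^J}(\Xi_A)=\mathsf P(X^J\in\Xi_A)$, this is exactly the first equality, and the second is obtained by running the identical argument with $A$ replaced by $\tilde A$ and $X^J$ by $\tilde X^J$. The starting point is the integrated form of \eqref{equations3}: because $L=Z$ is purely jump, $X^J$ satisfies $X^J_t = A\,S(X^J,t)+Z_t$, this identity holding for all $t\in[0,T]$ simultaneously, off a single null set. Rearranged,
$$X^J_t - A\,S(X^J,t) = Z_t = Z^2_{H_1}(Z,t)+Z^1_{H\setminus H_1}(Z,t),$$
where the last equality is the L\'evy--It\^o splitting of the jump part recorded after Theorem~\ref{levyito}.

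The next step is to replace $X^J$ by $Z$ inside the jump functionals appearing in the definition of $\Xi_A$. Observe that $X^J_t-Z_t=A\,S(X^J,t)$ is a continuous function of $t$; consequently $X^J$ and $Z$ have identical jumps, $\Delta X^J_s=\Delta Z_s$ for every $s$, almost surely---this is the pathwise content of the lemma yielding \eqref{XJZ}. Hence, off a single null set, $Z^1_{H\setminus H_1}(X^J,t)=Z^1_{H\setminus H_1}(Z,t)$ and $Z^2_{E^c_n}(X^J,t)=Z^2_{E^c_n}(Z,t)$ for all $t$ and $n$, the deterministic compensator $\int_{E^c_n}u\,\mu({\rm d}u)$ being common to both. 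Substituting these equalities together with the displayed identity collapses the expression inside $\Xi_A$ to the tail of the small-jump approximation:
$$X^J_t - A\,S(X^J,t) - Z^1_{H\setminus H_1}(X^J,t) - Z^2_{E^c_n}(X^J,t) = Z^2_{H_1}(Z,t) - Z^2_{E^c_n}(Z,t).$$

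It remains to let $n\to\infty$. By Remark~\ref{convergence}, and as restated in Corollary~\ref{convergence2}, $Z^2_{E^c_n}(Z,t)\to Z^2_{H_1}(Z,t)$ almost surely, so the right-hand side tends to $0$ and $X^J(\omega)\in\Xi_A$. \textbf{The main obstacle is the universal quantifier ``for all $t\in[0,T]$''} built into $\Xi_A$: the cited convergence is, a priori, only pointwise in $t$, with an exceptional set that may depend on $t$, while $[0,T]$ is uncountable, so one cannot simply intersect over all $t$. I would remove this obstacle by invoking the stronger, and standard, fact inherent in the L\'evy--It\^o construction that the compensated small-jump integral $t\mapsto Z^2_{E^c_n}(Z,t)$ converges to $t\mapsto Z^2_{H_1}(Z,t)$ \emph{uniformly} on $[0,T]$ almost surely: the approximants form an $L^2$-bounded c\`adl\`ag martingale, so Doob's maximal inequality promotes convergence at the fixed endpoint $T$ to uniform convergence on the compact interval, furnishing a single null set outside of which the limit vanishes for every $t$ at once. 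This yields $\mathsf P_{X^J}(\Xi_A)=1$, and the verbatim argument for $\tilde X^J$ gives $\mathsf P_{\tilde X^J}(\Xi_{\tilde A})=1$.
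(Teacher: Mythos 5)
Your argument is correct and follows essentially the same route as the paper: pass to the integral form of the SDE, note that $X^J-Z=AS(X^J,\cdot)$ is continuous so the two processes share their jumps pathwise (whence the $Z^1$ and $Z^2$ functionals of $X^J$ and of $Z$ coincide), and then let $n\to\infty$ using the convergence of the compensated small-jump sums from Remark~\ref{convergence} and Corollary~\ref{convergence2}. You are in fact more careful than the paper's rather terse proof on one point: the exceptional null set in the cited convergence a priori depends on $t$, and your appeal to almost-sure \emph{uniform} convergence on $[0,T]$ (via Doob's maximal inequality for the $L^2$-bounded compensated-sum martingales) is precisely the standard ingredient needed to secure the ``for all $t\in[0,T]$'' quantifier built into $\Xi_A$.
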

\begin{proof}By Corollary~\ref{convergence2}, $Z^1_{H\setminus H_1}(X^J,t) + Z^2_{E^c_n}(X^J, t)$ converges almost surely as $n\to\infty$ to $Z(L,t)$ ($t\in [0,T]$). Thus, $\mathsf P_{X^J}(\Xi_A)$ is equal to
$$\begin{array}{lcl}
\quad \mathsf P\big(X^J_t - AS(X^J, t) = Z(X^J, t)\; (t\in [0,T]) \big) = \mathsf P\big(X^J_t - A\int\limits_0^t X^J_s{\rm d}s = Z_t\; (t\in [0,T]) \big)= 1.\end{array} $$
The same proof applies for $\Xi_{\tilde{A}}$.\end{proof}

We are now ready to prove Theorem B.
\begin{proof}[Proof of Theorem B]Assume that $\mathsf P_{X^J} \ll \mathsf P_{\tilde{X}^J}$. By Lemma~\ref{lemma43}, $\mathsf P_{X^J}(\Xi_A) = 1 = \mathsf{P}_{\tilde{X}^J}(\Xi_{\tilde{A}})$. Consequently, by absolute continuity we must have $\mathsf P_{X^J}(\Xi_{\tilde{A}})=1$, which means that
$$X^J_t = \tilde{A} \int\limits_0^t X^J_s\,{\rm d}s + Z_t \quad \big(t\in [0,T]\big)$$
almost surely. We have thus proved that $X^J$ solves the stochastic differential equation ${\rm d}Y_t = AY_t {\rm d}t + {\rm d}Z_t$, so by the uniqueness of solutions, $X^J = \tilde{X}^J$ almost surely.\end{proof}

\subsection{Closing remarks}
In the case of stochastic processes in infinite dimensions it is customary to work with generators of infinitesimal semigroups rather than merely bounded linear operators. However, the proof methods employed in this paper required the operators $A$ and $\tilde{A}$ appearing in \eqref{equations} to be bounded (\emph{cf}.~the proofs of Proposition~\ref{prop1} and Lemma~\ref{ASBorel}). It is thus natural to ask whether Theorems A and B have their counterparts in the setting of generators of infinitesimal semigroups too. From this point of view, it is also desirable to investigate classes of those Feller processes for which analogous results can be established.

\subsection*{Acknowledgements} We wish to express our gratitude to Mateusz Kwa\'snicki (Wroc{\l}aw) for sharing with us a direct proof of Lemma~\ref{Z12Borel}. Furthermore, we are greatly indebted to the anonymous referee for spotting the need for the technical assumption \eqref{technical} that is indeed required when the underlying Hilbert space is infinite-dimensional.


\begin{thebibliography}{99}
\bibitem{App1} D.~Applebaum, \emph{L\'evy processes and stochastic calculus}, Cambridge University Press,
2004.
\bibitem{App2} D.~Applebaum, L\'evy processes and stochastic integrals in Banach Spaces, \emph{Prob. Math. Stat.}, \textbf{27} (2007), 75--88.
\bibitem{Bogachev} V.~I.~Bogachev, \emph{Measure Theory}, Volume II, Springer, 2006.
\bibitem{BvN} Z.~Brze\'zniak and J.~van Neerven, Equivalence of Banach space-valued Ornstein--Uhlenbeck processes, \emph{Stochastics Stochastics Rep.} \textbf{69} (2000), 77--94.
\bibitem{DZ} G.~Da Prato and J.~Zabczyk, \emph{Stochastic equations in infinite dimensions}, Cambridge University Press, 1992.
\bibitem{Det} E.~Dettweiler, Banach space valued processes with independent increments and stochastic integration, in: \emph{Probability in Banach Spaces}, IV Proceedings Oberwolfach 1982, A. Beck and K. Jacobs (Eds.), Lecture Notes in Math. No \textbf{990}, Springer, Berlin 1983, 54--84. 
\bibitem{Folland} G. B. Folland, \emph{Real analysis. Modern techniques and their applications}. Second edition. Pure and Applied Mathematics (New York). A Wiley-Interscience Publication. John Wiley \& Sons, Inc., New York, 1999.
\bibitem{PeszatGoldys} B.~Go{\l}dys and S.~Peszat, Law equivalence of stochastic linear systems, \emph{Stat. Probab. Lett.} \textbf{43}, No.3 (1999), 265--274.
\bibitem{IW} N.~Ikeda and S.~Watanabe, \emph{Stochastic Differential Equations and Diffusion Processes}, North-Holland, Amsterdam, 1981.
\bibitem{Jakubowski} A.~Jakubowski, On the Skorokhod topology, \emph{Ann. Inst. Henri Poincar\'e, Probabilit\'e s et Statistiques}, 22 (1986), 263--285.
\bibitem{kozlov} S.~M. Kozlov, Equivalence of measures in linear It\^{o} partial differential equations, \emph{Moscow Univ. Mathem. Bull.}, \textbf{32} (1970), 36--40. 
\bibitem{kwasnicki} M.~Kwa\'snicki, Transformations of c\`adl\`ag functions, \texttt{https://mathoverflow.net/a/291729/15129}, (2018).
\bibitem{Loges} W.~Loges, Girsanov's theorem in Hilbert space and an application to the statistics of Hilbert space- valued stochastic differential equations, \emph{Stoch. Process. Appl.}, \textbf{17}, Issue 2 (1984), 243--263.
\bibitem{MvN} B.~Maslowski and J. van Neerven, Equivalence of laws and null controllability for SPDEs driven by a fractional Brownian motion, \emph{Nonlinear Differential Equations and Applications}, \textbf{20} (2013), 1473--1498.
\bibitem{metiver} M.~M\'etivier, \emph{Notions fondamentales de la th\'eorie des probabilit\'es}, Dunod, Paris, 1971.
\bibitem{Peszat1} S.~Peszat, Equivalence of distributions of some Ornstein--Uhlenbeck processes taking values in Hilbert space, \emph{Prob. Math. Stat.}, \textbf{13} (1992), 7--17.
\bibitem{Peszat2} S.~Peszat, Law equivalence of solutions of some linear stochastic equations in Hilbert spaces, \emph{Studia Math.}, \textbf{101} (1992), 269--284.
\bibitem{riedlevangaans} M.~Riedle and O.~van Gaans, Stochastic integration for L{\'e}vy processes with values in Banach spaces, \emph{Stoch. Anal. Appl.} \textbf{119} (2009), 1952--1974.
\bibitem{Zabczyk} J.~Zabczyk, Law equivalence of Ornstein--Uhlenbeck processes, Gaussian Random
Fields (Nagoya, 1990), 420--432, Ser. \emph{Probab. Statist.}, \textbf{1}, World Sci. Publ., River
Edge, NJ, 1991.

\end{thebibliography}
\end{document}